\theoremstyle{plain}
\newtheorem{theorem}{Theorem}
\newtheorem*{theorem*}{Theorem}
\newtheorem{lemma}{Lemma}
\newtheorem*{lemma*}{Lemma}
\theoremstyle{definition}
\newtheorem*{definition*}{Definition}
\theoremstyle{remark}
\newtheorem{remark}{Remark}
\newtheorem{example}{Example}
\newtheorem*{remark*}{Remark}
\newtheorem*{statement*}{Statement}
\begin{document}
\title[Certain functions, operators,  and related problems]{Systems of functional equations and generalizations of certain functions}

\author{Symon Serbenyuk}

\subjclass[2010]{11K55, 11J72, 26A27, 11B34,  39B22, 39B72, 26A30, 11B34.}

\keywords{ systems of functional equations, continuous functions,  $q$-ary expansion}

\maketitle
\text{\emph{simon6@ukr.net}}\\
\text{\emph{ 45~Shchukina St., Vinnytsia,  21012, Ukraine}}
\begin{abstract}

The present article is devoted to generalized Salem functions, the generalized shift operator,  and certain related problems. 
A description of  further investigations of the author of this article  is given.
 These investigations (in terms of various representations of real numbers) include   generalized Salem functions and generalizations of the Gauss-Kuzmin problem.

\end{abstract}

\section{Introduction}

Nowadays it is well known that functional equations and systems of functional equations are widely used in mathematics and other sciences.   Modelling functions with complicated local structure  by systems of functional equations is a shining example of their applications in  function theory (\cite{Symon2019}). 

Note that a class of functions with complicated local structure consists of singular (for example, \cite{{Salem1943}, {Zamfirescu1981}, {Minkowski}, {S.Serbenyuk 2017}}),  nowhere monotonic \cite{Symon2017, Symon2019}, and nowhere differentiable functions  (for example, see \cite{{Bush1952}, {Serbenyuk-2016}}, etc.).

Now researchers are trying to find simpler examples of singular  functions. Interest in such functions is explained by their connection with modelling  real objects, processes, and phenomena (in physics, economics, technology, etc.) and with different areas of mathematics (for example, see~\cite{BK2000, ACFS2011, Kruppel2009, OSS1995, Sumi2009, Takayasu1984, TAS1993}).
A brief historical remark on singular functions is given in~\cite{ACFS2017}.

One of the simplest examples of singular functions was introduced by Salem.  In \cite{Salem1943}, Salem modeled the function 
$$
s(x)=s\left(\Delta^2 _{\alpha_1\alpha_2...\alpha_n...}\right)=\beta_{\alpha_1}+ \sum^{\infty} _{n=2} {\left(\beta_{\alpha_n}\prod^{n-1} _{i=1}{q_i}\right)}=y=\Delta^{Q_2} _{\alpha_1\alpha_2...\alpha_n...},
$$
where $q_0>0$, $q_1>0$, and $q_0+q_1=1$. This function is a singular function. However,  
generalizations of the Salem function can be non-differentiable functions or those that do not have a derivative on a certain set.

Note that many researches are devoted to the Salem function and its generalizations (for example, see \cite{ACFS2017, Kawamura2010, Symon2015, Symon2017, Symon2019} and references in these papers).

Describing the present investigations, a certain generalization of  the $q$-ary representation is considered and certain properties of the generalized shift operator defined in terms of some of these representations are studied. Also, several related further investigations  of the author of this paper are noted. 
The main attention\footnote{The present investigations were presented by the author  in the preprint \cite{preprint19} in 2019.}  is given to modelling some generalization of the Salem function by certain systems of functional equations and by using the generalized shift operator.

\section{Some generalizations  of $q$-ary expansions of real numbers }

Let us consider the following  representation    introduced  by G.~Cantor in~\cite{C1869} in 1869. 

Let $Q\equiv (q_k)$ be a fixed sequence of positive integers, $q_k>1$,  $\Theta_k$ be a sequence of the sets $\Theta_k\equiv\{0,1,\dots ,q_k-1\}$, and $i_k\in\Theta_k$. Then
\begin{equation}
\label{eq: series1}
[0,1]\ni x=\Delta^{Q} _{i_1i_2...i_n...}\equiv \frac{i_1}{q_1}+\frac{i_2}{q_1q_2}+\dots +\frac{i_n}{q_1q_2\dots q_n}+\dots.
\end{equation}
It is easy to see that the last expansion is the  $q$-ary expansion
\begin{equation}
\label{eq: q-series}
\frac{\alpha_1}{q}+\frac{\alpha_2}{q^2}+\dots+\frac{\alpha_n}{q^n}+\dots \equiv \Delta^q _{\alpha_1\alpha_2...\alpha_n...}
\end{equation}
of numbers  from the closed interval $[0,1]$ whenever the condition $q_k=q$ holds for all positive integers $k$. Here $q$ is a fixed positive integer, $q>1$, and $\alpha_n\in\{0,1,\dots , q-1\}$.

Let us note that certain numbers from $[0,1]$ have two different representations by series \eqref{eq: series1}, i.e., 
$$
\Delta^Q _{i_1i_2\ldots i_{m-1}i_m000\ldots}=\Delta^Q _{i_1i_2\ldots i_{m-1}[i_m-1][q_{m+1}-1][q_{m+2}-1]\ldots}=\sum^{m} _{k=1}{\frac{i_k}{q_1q_2\dots q_k}}.
$$
Such numbers are called \emph{$Q$-rational}. The other numbers in $[0,1]$ are called \emph{$Q$-irrational}.

Let $c_1,c_2,\dots, c_m$ be a fixed 
ordered tuple of integers such that $c_j\in\{0,1,\dots, q_j-~1\}$ for $j=\overline{1,m}$. 

\emph{A cylinder $\Delta^Q _{c_1c_2...c_m}$ of rank $m$ with base $c_1c_2\ldots c_m$} is the following set 
$$
\Delta^Q _{c_1c_2...c_m}\equiv\{x: x=\Delta^Q _{c_1c_2...c_m i_{m+1}i_{m+2}\ldots i_{m+k}\ldots}\}.
$$
That is,  any cylinder $\Delta^Q _{c_1c_2...c_m}$ is a closed interval of the form
$$
\left[\Delta^Q _{c_1c_2...c_m000}, \Delta^Q _{c_1c_2...c_m[q_{m+1}-1][q_{m+2}-1][q_{m+3}-1]...}\right].
$$

By analogy, in the case of representation \eqref{eq: q-series}, we get
$$
\Delta^q _{\alpha_1\alpha_2\ldots\alpha_{m-1}\alpha_m000\ldots}=\Delta^q _{\alpha_1\alpha_2\ldots \alpha_{m-1}[\alpha_m-1][q-1][q-1][q-1]...\ldots}=\sum^{m} _{k=1}{\frac{\alpha_k}{q^k}}.
$$
Also, an arbitrary cylinder $\Delta^{q} _{c_1c_2...c_m}$ is a closed interval of the form
$$
\left[\Delta^q _{c_1c_2...c_m000}, \Delta^q _{c_1c_2...c_m[q-1][q-1][q-1]...}\right].
$$

\section{Shift operators}

In this section, the shift operator is described and the generalized shift operator is studied for the cases of $q$-ary expansions and of expansions of numbers in series \eqref{eq: series1}.

 \emph{The shift operator $\sigma$ of expansion \eqref{eq:  series1}} is of the following form
$$
\sigma(x)=\sigma\left(\Delta^Q _{i_1i_2\ldots i_k\ldots}\right)=\sum^{\infty} _{k=2}{\frac{i_k}{q_2q_3\dots q_k}}=q_1\Delta^{Q} _{0i_2\ldots i_k\ldots}.
$$
It is easy to see that 
\begin{equation*}
\begin{split}
\sigma^n(x) &=\sigma^n\left(\Delta^Q _{i_1i_2\ldots i_k\ldots}\right)\\
& =\sum^{\infty} _{k=n+1}{\frac{i_k}{q_{n+1}q_{n+2}\dots q_k}}=q_1\dots q_n\Delta^{Q} _{\underbrace{0\ldots 0}_{n}i_{n+1}i_{n+2}\ldots}.
\end{split}
\end{equation*}
Therefore, 
\begin{equation}
\label{eq: Cantor series 3}
x=\sum^{n} _{k=1}{\frac{i_k}{q_1q_2\dots q_k}}+\frac{1}{q_1q_2\dots q_n}\sigma^n(x).
\end{equation}

Note that 
$$
\sigma^n\left(\Delta^q _{\alpha_1\alpha_2\ldots \alpha_k\ldots}\right) =\sum^{\infty} _{k=n+1}{\frac{\alpha_k}{q^{k-n}}}=\Delta^{q} _{\alpha_{n+1}\alpha_{n+2}\ldots}.
$$

In \cite{S. Serbenyuk alternating Cantor series 2013} (see also \cite{{slides2013}, {preprint2013}}), the notion of  the generalized shift operator was introduced in terms of series \begin{equation}
\label{eq: alternating series1}
x=\Delta^{-Q} _{i_1i_2...i_n...}\equiv\frac{i_1}{-q_1}+\frac{i_2}{(-q_1)(-q_2)}+\dots +\frac{i_n}{(-q_1)(-q_2)\dots (-q_n)}+\dots .
\end{equation}
That is,   
$$
\sigma_m\left(\sum^{\infty} _{k=1}{\frac{(-1)^ki_k}{q_1q_2\cdots q_k}}\right)
$$
$$
=-\frac{i_1}{q_1}+\frac{i_2}{q_1q_2}-\frac{i_3}{q_1q_2q_3}+\dots + \frac{(-1)^{m-1}i_{m-1}}{q_1q_2\cdots q_{m-1}}+\frac{(-1)^{m}i_{m+1}}{q_1q_2\cdots q_{m-1}q_{m+1}}+\frac{(-1)^{m+1}i_{m+2}}{q_1q_2\cdots q_{m-1}q_{m+1}q_{m+2}}+\dots .
$$
The idea includes the following:  any number from a certain interval can be represented by two fixed sequences $(-q_n)$  and  $(i_n)$. The generalized shift operator maps the preimage into a number represented by the following two sequences $(-q_1,-q_2,\dots ,- q_{m-1}, -q_{m+1}, -q_{m+2}, \dots )$ and $(i_1,i_2,\dots , i_{m-1}, i_{m+1}, i_{m+2}, \dots )$. In terms of certain encodings of real numbers, this number can belong to another interval. 

Let us remark  that, in this section, the main attention is given to the generalized shift operator defined in terms of series \eqref{eq: series1} because this series is a generalization of a $q$-ary expansion and models (in the general case) a numeral system with a variable alphabet. Let us note that some numeral system is a numeral system with a variable alphabet whenever there exist at least two numbers     $k$ and $l$  such that the condition $A_k\ne A_l$  holds for  the representation $\Delta_{i_1i_2...i_n...}$  of numbers in terms of this numeral system, where  $i_k\in A_k$  and  $i_l\in A_l$, as well as  $k\ne l$.

Suppose a number $x\in [0,1]$ is  represented by  series \eqref{eq: series1}. Then  
$$
\sigma_m(x)=\sum^{m-1} _{k=1}{\frac{i_k}{q_1q_2\cdots q_k}}+\sum^{\infty} _{l=m+1}{\frac{i_l}{q_1q_2\cdots q_{m-1}q_{m+1}\cdots q_l}}.
$$
Denote by $\zeta_{m+1}$ the sum $\sum^{\infty} _{l=m+1}{\frac{i_l}{q_1q_2\cdots q_{m-1}q_{m+1}\cdots q_l}}$ and by $\vartheta_{m-1}$ the sum $\sum^{m-1} _{k=1}{\frac{i_k}{q_1q_2\cdots q_k}}$. Then $\zeta_{m+1}=q_m(x-\vartheta_m)$ and
\begin{equation}
\label{eq: generalized shift 1}
\sigma_m(x)=q_mx-(q_m-1)\vartheta_{m-1}-\frac{i_m}{q_1q_2\cdots q_{m-1}}.
\end{equation}

Let us remark that 
$$
\sigma(x)=\sigma_1(x)=\sum^{\infty} _{n=2}{\frac{i_n}{q_2q_3\cdots q_n}}=q_1\Delta^Q _{0i_2i_3...i_n...}
$$
and
$$
\sigma_m(x)=\Delta^Q _{i_1i_2...i_{m-1}000...}+q_m\Delta^Q _{\underbrace{0...0}_{m}i_{m+1}i_{m+2}...}=\Delta^Q _{i_1i_2...i_{m-1}0i_{m+1}i_{m+2}...}+(q_m-1)\Delta^Q _{\underbrace{0...0}_{m}i_{m+1}i_{m+2}...}.
$$

Let us consider some remarks on compositions of shift operators. 
\begin{remark}
Suppose $x=\Delta^Q _{i_1i_2...i_k...}$ and $m$ is a fixed positive integer. Then
$$
\sigma_{m}(x)=\sigma_{m}\left(\Delta^Q _{i_1i_2...i_k...}\right)=\Delta^{Q\setminus \{q_m\}} _{i_1i_2...i_{m-1}i_{m+1}...},
$$
$$
\sigma_{m}\circ \sigma_{m}(x)=\sigma^{2} _{m}(x)=\sigma^{2} _{m}\left(\Delta^Q _{i_1i_2...i_k...}\right)=\sigma_m\left(\sigma_m(\Delta^Q _{i_1i_2...i_k...})\right)=\sigma_{m}\left(\Delta^{Q\setminus \{q_m\}} _{i_1i_2...i_{m-1}i_{m+1}...}\right)=\Delta^{Q\setminus \{q_m, q_{m+1}\}} _{i_1i_2...i_{m-1}i_{m+2}...},
$$
and
$$
\underbrace{\sigma_{m}\circ \ldots \circ \sigma_{m}(x)}_{n}=\sigma^{n} _{m}\left(\Delta^Q _{i_1i_2...i_k...}\right)=\Delta^{Q\setminus \{q_m, q_{m+1}, \dots , q_{m+n-1}\}} _{i_1i_2...i_{m-1}i_{m+n-1}i_{m+n}...}.
$$
Here the sequence $Q\setminus \{q_m, q_{m+1}, \dots , q_{m+n-1}\}$ is $Q=(q_k)$ without the elements $q_m, q_{m+1}, \dots , q_{m+n-1}$. 

For the case of the shift operator, we get
\begin{equation*}
\begin{split}
\sigma^n(x) &=\sigma^n\left(\Delta^Q _{i_1i_2\ldots i_k\ldots}\right)=\Delta^{Q\setminus \{q_1, q_{2}, \dots , q_{n}\}} _{i_{n+1}i_{n+2}..i_{n+k}...}\\
& =q_1\dots q_n\Delta^{Q} _{\underbrace{0\ldots 0}_{n}i_{n+1}i_{n+2}\ldots}=\sum^{\infty} _{k=n+1}{\frac{i_k}{q_{n+1}q_{n+2}\dots q_k}}.
\end{split}
\end{equation*}
\end{remark}
\begin{remark}
Using the last remark, now let us describe a more general case. Suppose that $n_1$ and $n_2$ are two positive integers. Then
\begin{equation}
\label{eq: 2-composition}
\sigma_{n_2}\circ \sigma_{n_1}(x)=\sigma_{n_2}\left(\Delta^{Q\setminus \{q_{n_1}\}} _{i_1i_2...i_{n_1-1}i_{n_1+1}...}\right)=\begin{cases}
\Delta^{Q\setminus \{q_{n_1}, q_{n_2}\}} _{i_1i_2...i_{n_2-1}i_{n_2+1}...i_{n_1-1}i_{n_1+1}...}&\text{if $n_1>n_2$}\\
\Delta^{Q\setminus \{q_{n_1}, q_{n_2+1}\}} _{i_1i_2...i_{n_1-1}i_{n_1+1}...i_{n_2-1}i_{n_2}i_{n_2+2}...}&\text{if $n_1<n_2$}\\
\Delta^{Q\setminus \{q_{n_0}, q_{n_0+1}\}} _{i_1i_2...i_{n_0-1}i_{n_0+2}...}&\text{if $n_1=n_2=n_0$.}
\end{cases}
\end{equation}

For example,
$$
\sigma_5 \circ \sigma_2 (x)=\sigma_5 \circ \sigma_2\left(\Delta^Q _{i_1i_2\ldots i_k\ldots}\right)=\sigma_5 \left(\Delta^{Q\setminus \{q_2\}} _{i_1i_3i_4i_5\ldots i_k\ldots}\right)=\Delta^{Q\setminus \{q_2, q_6\}} _{i_1i_3i_4i_5i_7i_8i_9\ldots},
$$
$$
\sigma_3 \circ \sigma_6 (x)=\sigma_3 \circ \sigma_6\left(\Delta^Q _{i_1i_2\ldots i_k\ldots}\right)=\sigma_3 \left(\Delta^{Q\setminus \{q_6\}} _{i_1i_2i_3i_4i_5i_7i_8i_9\ldots}\right)=\Delta^{Q\setminus \{q_3, q_6\}} _{i_1i_2i_4i_5i_7i_8i_9\ldots},
$$
and
$$
\sigma_3 \circ \sigma_3 (x)=\sigma^2 _3(x)=\sigma_3 \circ \sigma_3\left(\Delta^Q _{i_1i_2\ldots i_k\ldots}\right)=\sigma_3 \left(\Delta^{Q\setminus \{q_3\}} _{i_1i_2i_4i_5i_6i_7i_8i_9\ldots}\right)=\Delta^{Q\setminus \{q_3, q_4\}} _{i_1i_2i_5i_6i_7i_8i_9\ldots}.
$$
\end{remark}
\begin{remark}
Let us note that, in the case of $q$-ary expansions of real numbers,  properties of the generalized shift operator are similar to those  of the generalized shift operator for expansions~\eqref{eq: series1}. Really, 
\begin{equation}
\label{eq: generalized shift q-ary}
\sigma_m\left(\Delta^q _{\alpha_1\alpha_2...\alpha_n...}\right)=qx-\frac{\alpha_m}{q^{m-1}}-(q-1)\sum^{m-1} _{k=1}{\frac{\alpha_k}{q^k}}.
\end{equation}
However, 
\begin{equation}
\label{eq: gshq}
\sigma_m\left(\Delta^q _{\alpha_1\alpha_2...\alpha_n...}\right)=\Delta^q _{\alpha_1\alpha_2...\alpha_{m-1}\alpha_{m+1}...}.
\end{equation}
\end{remark}

Let us note some auxiliary property which is useful for modelling the main object of this research. 
\begin{remark}
\label{rm: the main remark}
Suppose that numbers $x\in [0,1]$ are represented in terms of the $q$-ary representation and we need to delete the digits $\alpha_{n_1}, \alpha_{n_2}, \dots , \alpha_{n_k}$ (according to this fixed order) by using a composition of the generalized shift operators in $x=\Delta^q _{\alpha_1\alpha_2...\alpha_k...}$. Here $(n_k)$ is a finite fixed sequence of positive integers such that $n_i\ne n_j$ for $i\ne j$. That is, we model
$$
x_0=\Delta^q _{\alpha_1\alpha_2...\alpha_{n_1-1}\alpha_{n_1+1}...\alpha_{n_2-1}\alpha_{n_2+1}...\alpha_{n_k-1}\alpha_{n_k+1}\alpha_{n_k+2}...\alpha_{n_k+t}...},~~~\mbox{where}~t=1,2,3, \dots .
$$

Using \eqref{eq: 2-composition} and \eqref{eq: gshq}, a certain sequence $(\bar n_k)$ of positive integers is generated. That is, for all $i=\overline{1,k}$, 
$$
\bar n_i= n_i-\varrho_i,
$$
where $\varrho_i$ is the number of all numbers which are less than $n_i$ in the finite sequence $n_1, n_2, \dots , n_i$.

So,
$$
x_0=\sigma_{\bar n_k}\circ \sigma_{\bar n_{k-1}} \circ \dots \circ \sigma_{\bar n_2} \circ \sigma_{\bar n_1}(x).
$$

Let us consider the following example. Suppose $(n_k)=(1, 5, 7, 3, 6)$. That is, we generate a certain  sequence of the generalized shift operators for obtaining $x_0=\Delta^q _{\alpha_2\alpha_4\alpha_8\alpha_9\alpha_{10}...}$. Really, 
$$
\bar n_1=n_1=1: ~~~~~\sigma_{\bar n_1}(x)=\sigma_{n_1}(x)=\sigma_1(x)=\Delta^q _{\alpha_2\alpha_3\alpha_4\alpha_5\alpha_6\alpha_7\alpha_8\alpha_9\alpha_{10}...},
$$
$$
\bar n_2=n_2-1=5-1=4: 
$$
$$
\sigma_{\bar n_2} \circ\sigma_{\bar n_1}(x)=\sigma_4\circ \sigma_{1}(x)=\sigma_4(\sigma_{1}(x))=\sigma_4\left(\Delta^q _{\alpha_2\alpha_3\alpha_4\alpha_5\alpha_6\alpha_7\alpha_8\alpha_9\alpha_{10}...}\right)=\Delta^q _{\alpha_2\alpha_3\alpha_4\alpha_6\alpha_7\alpha_8\alpha_9\alpha_{10}...},
$$
$$
\bar n_3=n_3-2=7-2=5: 
$$
$$
\sigma_{\bar n_3} \circ\sigma_{\bar n_2} \circ\sigma_{\bar n_1}(x)=\sigma_5\circ \sigma_4\circ \sigma_{1}(x)=\Delta^q _{\alpha_2\alpha_3\alpha_4\alpha_6\alpha_8\alpha_9\alpha_{10}...}
$$
(in the third step, the digits $\alpha_1, \alpha_5, \alpha_7$ are deleted),
$$
\bar n_4=n_4-1=3-1=2:
$$
$$
\sigma_{\bar n_4} \circ\sigma_{\bar n_3} \circ\sigma_{\bar n_2} \circ\sigma_{\bar n_1}(x)=\sigma_2\circ\sigma_5\circ \sigma_4\circ \sigma_{1}(x)=\sigma_2\left(\Delta^q _{\alpha_2\alpha_3\alpha_4\alpha_6\alpha_8\alpha_9\alpha_{10}...}\right)=\Delta^q _{\alpha_2\alpha_4\alpha_6\alpha_8\alpha_9\alpha_{10}...},
$$
and finally, 
$$
\bar n_5=n_5-3=6-3=3:
$$
$$
\sigma_{\bar n_5} \circ\sigma_{\bar n_4} \circ\sigma_{\bar n_3} \circ\sigma_{\bar n_2} \circ\sigma_{\bar n_1}(x)=\sigma_3\circ\sigma_2\circ\sigma_5\circ \sigma_4\circ \sigma_{1}(x)=\sigma_3\left(\Delta^q _{\alpha_2\alpha_4\alpha_6\alpha_8\alpha_9\alpha_{10}...}\right)=\Delta^q _{\alpha_2\alpha_4\alpha_8\alpha_9\alpha_{10}...}.
$$
\end{remark}

\begin{lemma}
In the case of expansion \eqref{eq: series1}, the generalized shift operator has the following properties:
\begin{itemize}
\item $\sigma \circ \sigma^{m} _{2}(x)=\sigma^{m+1} (x)$.
\item  Suppose $(k_n)$  is a sequence of positive integers such that  $k_n=k_{n-1}+1$, $n=2,3, \dots$. Then 
$$
\sigma^{k_n}\circ \sigma_{k_n}\circ \sigma_{k_{n-1}}\circ \ldots \circ \sigma_{k_1}(x)=\sigma^{k_n+n}(x).
$$
\item  Suppose $(k_n)$  is an arbitrary finite subsequence of positive integers such that $k_1>k_2>\dots >k_n$. Then 
$$
\sigma^{k_n-n}\circ \sigma_{k_n}\circ \sigma_{k_{n-1}}\circ \ldots \circ \sigma_{k_1}(x)=\sigma^{k_n}(x).
$$
\item The mapping $\sigma_m$ is continuous at each point of the interval $\left(\inf\Delta^Q _{c_1c_2...c_m}, \sup\Delta^Q _{c_1c_2...c_m}\right)$. The endpoints of $\Delta^Q _{c_1c_2...c_m}$ are  points of discontinuity of the mapping.
\item The mapping $\sigma_m$ has a derivative almost everywhere (with respect to
the Lebesgue measure). If the mapping  has a derivative at the point $x=\Delta^Q _{i_1i_2...i_k...}$, then $\left(\sigma_m\right)^{'}=q_m$.
\item
$$
x-\sigma_m(x)=\frac{i_m}{q_1q_2\cdots q_m}+\frac{\sigma^m(x)}{q_1q_2\cdots q_m}(1-q_m).
$$
\end{itemize}
\end{lemma}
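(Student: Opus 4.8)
The plan is to split the six assertions into a combinatorial group, (1)--(3), handled through the ``digit-deletion'' description of the operators together with the two-composition formula \eqref{eq: 2-composition}, and an analytic/algebraic group, (4)--(6), handled through the closed form \eqref{eq: generalized shift 1} (and, for (6), \eqref{eq: Cantor series 3}). Recall that $\sigma=\sigma_1$ deletes the first digit and the first term of $Q$, that $\sigma^n$ deletes the first $n$ digits and the first $n$ terms of $Q$, and that $\sigma^{n}_{m}$ deletes the digits $i_m,\dots,i_{m+n-1}$ together with the terms $q_m,\dots,q_{m+n-1}$. From these facts (1) is immediate: applying $\sigma^{m}_{2}$ to $x=\Delta^Q_{i_1i_2i_3\ldots}$ yields $\Delta^{Q\setminus\{q_2,\ldots,q_{m+1}\}}_{i_1i_{m+2}i_{m+3}\ldots}$, and then $\sigma=\sigma_1$ removes the leading digit $i_1$ and the leading term $q_1$, producing $\Delta^{Q\setminus\{q_1,\ldots,q_{m+1}\}}_{i_{m+2}i_{m+3}\ldots}=\sigma^{m+1}(x)$.

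For (2) and (3) I would induct on $n$, using \eqref{eq: 2-composition} at each step to keep a simultaneous record of which terms of $Q$ and which digits have been removed so far, in the spirit of the re-indexing bookkeeping ($\varrho_i$-corrections) of Remark~\ref{rm: the main remark}. In the consecutive-increasing case of (2) one checks that $\sigma_{k_n}\circ\cdots\circ\sigma_{k_1}$ deletes the terms $q_{k_1},q_{k_1+2},\ldots,q_{k_1+2(n-1)}$ and leaves, in order, $q_1,\ldots,q_{k_1-1}$ in positions $1,\ldots,k_1-1$, then $q_{k_1+1},q_{k_1+3},\ldots,q_{k_1+2n-1}$ in positions $k_1,\ldots,k_1+n-1$, then $q_{k_1+2n},q_{k_1+2n+1},\ldots$ (and likewise for the digits); consequently $\sigma^{k_n}=\sigma^{k_1+n-1}$ strips off exactly the first $k_1+n-1$ surviving digits and terms, so in total the first $k_1+2n-1=k_n+n$ digits and terms of $x$ have been deleted, which is $\sigma^{k_n+n}(x)$. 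In the strictly-decreasing case of (3) each successive operator $\sigma_{k_{j+1}}$ carries a smaller index than all the previous ones, so no re-indexing occurs and $\sigma_{k_n}\circ\cdots\circ\sigma_{k_1}$ deletes precisely the digits and terms in positions $k_1,\ldots,k_n$; prepending the stated power of $\sigma$ then removes the leftover initial block and one reads off a pure power of $\sigma$, the exponent being pinned down by counting the total number of initial terms removed.

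For (4), on a fixed rank-$m$ cylinder $\Delta^Q_{c_1\ldots c_m}$ the digits $i_1=c_1,\dots,i_m=c_m$ are constant, so \eqref{eq: generalized shift 1} reads $\sigma_m(x)=q_mx+C$ with $C=-(q_m-1)\sum_{k=1}^{m-1}\frac{c_k}{q_1\cdots q_k}-\frac{c_m}{q_1\cdots q_{m-1}}$ constant; hence $\sigma_m$ is affine, and in particular continuous, on the interior of each rank-$m$ cylinder, while at an endpoint the additive constants of the two adjacent rank-$m$ cylinders differ by $\pm\frac{1}{q_1\cdots q_{m-1}}\neq 0$, so the one-sided limits disagree and the endpoint is a genuine discontinuity. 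For (5), there are only $q_1q_2\cdots q_m$ cylinders of rank $m$, so $\sigma_m$ is piecewise affine with finitely many pieces, each of slope $q_m$; therefore $\sigma_m'(x)=q_m$ everywhere except at the finite set of cylinder endpoints, in particular Lebesgue-almost everywhere. For (6), substituting $\sigma_m(x)$ from \eqref{eq: generalized shift 1} gives $x-\sigma_m(x)=(1-q_m)\bigl(x-\sum_{k=1}^{m-1}\frac{i_k}{q_1\cdots q_k}\bigr)+\frac{i_m}{q_1\cdots q_{m-1}}$; rewriting $x-\sum_{k=1}^{m-1}\frac{i_k}{q_1\cdots q_k}=\frac{i_m}{q_1\cdots q_m}+\frac{\sigma^m(x)}{q_1\cdots q_m}$ by means of \eqref{eq: Cantor series 3} and collecting the two $\frac{i_m}{q_1\cdots q_m}$-terms produces the asserted identity.

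I expect the main obstacle to lie in (2) and (3): one must carry a correct account of how positions are re-indexed through $n$ successive generalized shifts and then match the surviving initial segment with the right power of the ordinary shift, and it is in this inductive step that off-by-one errors in the exponents are easiest to commit. Once \eqref{eq: generalized shift 1} is in hand, items (4)--(6) are routine.
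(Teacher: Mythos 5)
Your handling of items (1), (2), (5) and (6) is correct and is in fact far more explicit than the paper's own proof, which dismisses everything except the jump computation and item (6) with the sentence that all properties ``follow from the definition of $\sigma_m$ and equality \eqref{eq: generalized shift 1}''; in particular your bookkeeping for (2) (deleted bases $q_{k_1},q_{k_1+2},\dots,q_{k_1+2(n-1)}$, total count $k_1+2n-1=k_n+n$) checks out against \eqref{eq: 2-composition}. The two remaining items, however, contain genuine gaps.

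For (3), the sentence ``prepending the stated power of $\sigma$ then removes the leftover initial block and one reads off a pure power of $\sigma$'' is exactly the step that fails. After $\sigma_{k_n}\circ\cdots\circ\sigma_{k_1}$ the deleted positions are $\{k_1,\dots,k_n\}$, whose \emph{minimum} is $k_n$; applying $\sigma^{k_n-n}$ then removes the first $k_n-n$ surviving digits, namely $i_1,\dots,i_{k_n-n}$, so the total deleted set is $\{1,\dots,k_n-n\}\cup\{k_1,\dots,k_n\}$, which is an initial segment only when $n=1$. Concretely, for $k_1=5$, $k_2=3$ one gets $\sigma^{1}\circ\sigma_3\circ\sigma_5(x)=\Delta^{Q\setminus\{q_1,q_3,q_5\}}_{i_2i_4i_6i_7\ldots}\neq\sigma^{3}(x)$. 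The identity your own counting actually produces is $\sigma^{k_1-n}\circ\sigma_{k_n}\circ\cdots\circ\sigma_{k_1}(x)=\sigma^{k_1}(x)$, governed by the \emph{largest} index $k_1$ just as (2) is governed by its largest index $k_n$; a proof must record this correction rather than assert the printed form. For (4), your claim that at every endpoint ``the additive constants of the two adjacent rank-$m$ cylinders differ by $\pm\frac{1}{q_1\cdots q_{m-1}}$'' is true only when those two cylinders agree in their first $m-1$ digits, i.e.\ when the endpoint is a $Q$-rational of rank exactly $m$. At an endpoint of rank $n<m$ the two adjacent bases are $c_1\ldots c_n0\ldots0$ and $c_1\ldots[c_n-1][q_{n+1}-1]\ldots[q_m-1]$, and in $C=-(q_m-1)\vartheta_{m-1}-\frac{i_m}{q_1\cdots q_{m-1}}$ the change $\frac{1}{q_1\cdots q_{m-1}}$ in $\vartheta_{m-1}$ exactly cancels the change of $i_m$ from $q_m-1$ to $0$, so the jump is $0$ and $\sigma_m$ is continuous there: in binary, $\sigma_2$ is continuous at $1/2=\Delta^2_{1000\ldots}=\Delta^2_{0111\ldots}$, which is nevertheless an endpoint of the rank-$2$ cylinders $\Delta^2_{01}$ and $\Delta^2_{10}$. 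This is precisely why the paper's proof computes the jump only in the case $m=n$ and asserts continuity at $Q$-rational points with $m\neq n$; your argument needs the same case split, and the set of discontinuities is exactly the set of $Q$-rationals of rank $m$, not the full set of cylinder endpoints.
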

\begin{proof}
  All the properties follow from the definition of  $\sigma_m$  and equality \eqref{eq: generalized shift 1}.  

Let us consider a cylinder $\Delta^Q _{c_1c_2...c_n}$.  It is easy to see that $\lim_{x\to x_0}{\sigma_m(x)}=x_0$ holds for  any Q-irrational point  from $\Delta^Q _{c_1c_2...c_n}$ and all Q-rational points whenever   $m\ne n$. If  $m=n$, then 
$$
\lim_{x\to x_0+0}{\sigma_m(x)}=\sigma_m(x^{(1)} _0)=\sigma_m\left(\Delta^Q _{i_1i_2...i_{n-1}i_n000...}\right), 
$$
$$
\lim_{x\to x_0-0}{\sigma_m(x)}=\sigma_m(x^{(2)} _0)=\sigma_m\left(\Delta^Q _{i_1i_2...i_{n-1}[i_n-1][q_{n+1}-1][q_{n+2}-1]...}\right),
$$
and
$$
\sigma_m(x^{(1)} _0)-\sigma_m(x^{(2)} _0)=-\frac{1}{q_1q_2\cdots q_{m-1}}.
$$
In addition, 
$$
x-\sigma_m(x)=\vartheta_m+\frac{\sigma^m (x)}{q_1q_2\cdots q_m}-\vartheta_{m-1}-\zeta_{m+1}=\frac{i_m}{q_1q_2\cdots q_m}+\frac{\sigma^m(x)}{q_1q_2\cdots q_m}(1-q_m).
$$
\end{proof}

Let  us consider expansion \eqref{eq: alternating series1}. In this case,
$$
\sigma_m(x)=\sigma_m\left(\sum^{\infty} _{n=1}{\frac{(-1)^ni_n}{q_1q_2\cdots q_n}}\right)=\sum^{m-1} _{k=1}{\frac{(-1)^ki_k}{q_1q_2\cdots q_k}}+\sum^{\infty} _{j=m+1}{\frac{(-1)^{j-1}i_j}{q_1q_2\cdots q_{m-1}q_{m+1}\cdots q_j}}
$$
$$
=-q_mx+(1+q_m)\sum^{m-1} _{k=1}{\frac{(-1)^ki_k}{q_1q_2\cdots q_k}}+\frac{(-1)^mi_m}{q_1q_2\cdots q_{m-1}}.
$$
So, $\sigma_m$ is a piecewise linear function since  $\sum^{m} _{k=1}{\frac{(-1)^kc_k}{q_1q_2\cdots q_k}}$ is constant for the set 
$\Delta^Q _{c_1c_2...c_m}$.

In the paper \cite{preprint2019},   the generalized shift operator is investigated in more detail.  
In the next articles of the author of this paper, the notion of the generalized shift operator will be investigated in more detail and applied by the author of the present article  in terms of various representations of real numbers (e.g., positive and alternating Cantor series and their generalizations, as well as Luroth, Engel series, etc., various continued fractions).

Let us consider certain applications of the generalized shift operator. One can model generalizations of the Gauss-Kuzmin problem and generalizations of the Salem function.

\section{Generalizations of the Gauss-Kuzmin problem}

This problem is one of the first and still one of the most important results in the metrical theory of continued
fractions \cite{Lasku}.  The problem was formulated by Gauss and  the first solution was received by Kuzmin \cite{Kuzmin}. The problem is investigated by a number of researchers for different types  of continued fractions 
(for example, see \cite{{Kalpazidou}, {Lasku},  {Lasku2014}} and references in the papers).

The Gauss-Kuzmin problem is to calculate the limit
$$\lim_{n\to\infty}{\lambda\left(E_n(x)\right)},
$$
where $\lambda(\cdot)$ is the Lebesgue measure of a set and  the set $E_n(x)$ is a set of the form
$$
E_n=\left\{z: \sigma^n (z)<x\right\}.
$$
Here $z=\Delta _{i_1i_2...i_k...}$, i.e., $\Delta _{i_1i_2...i_k...}$ is a certain representation of real numbers, $\sigma$ is the shift operator. 

Generalizations of the Gauss-Kuzmin problem are to calculate the limit
$$
\lim_{k\to\infty}{\lambda\left(\tilde E^{} _{n_k}(x)\right)},
$$
for  sets of the following forms:
\begin{itemize}
\item 
$$
\tilde E^{} _{n_k}(x)=\left\{z: \sigma_{n_k}\circ \sigma_{n_{k-1}}\circ \ldots \circ \sigma_{n_1}(z)<x\right\}
$$
including (here $(n_k)$ is a certain fixed sequence of positive integers) the cases when $(n_k)$ is a constant sequence. 

\item the set $\tilde E^{} _{n_k}(x)$ under the condition that $n_k=\psi(k)$, where $\psi$ is a certain function of the positive integer argument. 

\item
$$
\tilde E^{} _{n_k}(x)=\left\{z: \underbrace{\sigma_{n_k}\circ \sigma_{n_{k-1}}\circ \ldots \circ \sigma_{n_1}}_{\varphi(m,k,c)}(z)<x\right\},
$$
where $\varphi$ is a certain function and  $m,c$ are some parameters (if applicable). That is, for example, 
$$
\tilde E^{} _{n_k}(x)=\left\{z: \underbrace{\sigma_{m}\circ \sigma_{m}\circ \ldots \circ \sigma_{m}}_{k}(z)<x\right\},
$$
where $k>c$ and $c$ is a fixed positive integer, 
or
$$
\tilde E^{} _{n_k}(x)=\left\{z: \underbrace{\sigma_{m}\circ \sigma_{m}\circ \ldots \circ \sigma_{m}}_{k}(z)<x\right\},
$$
where $k \equiv 1 (\mod c) $ and $c>1$ is a fixed positive integer. 

\item In the general case,
$$
\tilde E^{} _{n_k}(x)=\left\{z: \underbrace{\sigma_{\psi(\varphi(m,k,c))}\circ  \ldots \circ \sigma_{\psi(1)}}_{\varphi(m,k,c)}(z)<x\right\}.
$$
\end{itemize}

In addition, one can formulate  such problems in terms of the shift operator. For example, one can formulate the Gauss-Kuzmin problem for  the following sets:
$$
\tilde E^{} _{n_k}(z)=\left\{z: \sigma^{n_k}(z)<\sigma^{k_0}(z)\right\},
$$
where $k_0$, $(n_k)$ are a fixed number and a fixed sequence.
$$
\tilde E^{} _{n_k}(x)=\left\{z: \sigma^{n_k}(z)<\sigma^{k_0}(x)\right\}.
$$
$$
\tilde E^{} _{n}(x)=\left\{z: \sigma^{\psi(n)}(z)<x\right\},
$$
where $\psi(n)$ is a certain function of the positive integer argument.

In addition, 
$$
\tilde E^{} _{n}(z)=\left\{z: \sigma^{\psi(n)}(z)<\sigma^{\varphi(n)}(z)\right\},
$$
$$
\tilde E^{} _{n}(x)=\left\{z: \sigma^{\psi(n)}(z)<\sigma^{\varphi(n)}(x)\right\},
$$
where $\psi, \varphi$  are certain functions of the positive integer arguments.

It is easy to see that similar problems can be formulated for the case of the generalized shift operator.

In next articles of the author of the present article, such problems will be considered by the author of this article in terms of various  numeral systems (with a finite or infinite alphabet, with a constant or variable alphabet, positive, alternating, and sign-variable expansions, etc.).


\section{Generalizations of the Salem function}

Let us consider certain functions whose arguments are represented by the $q$-ary expansion.

Suppose $(n_k)$ is a fixed sequence of positive integers such that $n_i\ne n_j$ for $i\ne j$ and such that  for any $n\in\mathbb N$ there exists a number $k_0$ for which the condition $n_{k_0}=n$ holds. Suppose $\bar n_k=n_k-\varrho_k$ for all $k=1,2, 3, \dots$, where $\varrho_k$ is the number of all numbers which are less than $n_k$ in the finite sequence $n_1, n_2, \dots , n_k$.

\begin{theorem}
Let $P_q=\{p_0,p_1,\dots , p_{q-1}\}$ be a fixed tuple of real numbers such that $p_i\in (-1,1)$, where $i=\overline{0,q-1}$, $\sum_i {p_i}=1$, and $0=\beta_0<\beta_i=\sum^{i-1} _{j=0}{p_j}<1$ for all $i\ne 0$. Then the following system of functional equations
\begin{equation}
\label{eq: system-q}
f\left(\sigma_{\bar n_{k-1}}\circ \sigma_{\bar n_{k-2}}\circ \ldots \circ \sigma_{\bar n_1}(x)\right)=\beta_{\alpha_{n_k}}+p_{\alpha_{n_k},}f\left(\sigma_{\bar n_{k}}\circ \sigma_{\bar n_{k-1}}\circ \ldots \circ \sigma_{\bar n_1}(x)\right),
\end{equation}
where $x=\Delta^q _{\alpha_1\alpha_2...\alpha_k...}$, $k=1,2, \dots$, and $\sigma_0(x)=x$, has the unique solution
$$
g(x)=\beta_{\alpha_{n_1}}+\sum^{\infty} _{k=2}{\left(\beta_{\alpha_{n_k}}\prod^{k-1} _{j=1}{p_{\alpha_{n_j}}}\right)}
$$
in the class of determined and bounded on $[0, 1]$ functions. 
\end{theorem}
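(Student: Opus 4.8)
The plan is to regard the system \eqref{eq: system-q} as a single linear recursion run along the orbit of $x$ under the generalized shifts, to extract its unique bounded solution by iteration, and to check separately that the proposed $g$ is bounded and single‑valued. Throughout write $\Phi_0=\mathrm{id}$ and $\Phi_k=\sigma_{\bar n_k}\circ\sigma_{\bar n_{k-1}}\circ\cdots\circ\sigma_{\bar n_1}$ for $k\ge1$, and set $\rho=\max_{0\le i\le q-1}|p_i|$; since $P_q$ is a finite tuple with each $p_i\in(-1,1)$, we have $\rho<1$. First I would record that $g$ is bounded: because $0=\beta_0$ and $0<\beta_i<1$ for $i\neq0$, every summand obeys $\left|\beta_{\alpha_{n_k}}\prod_{j=1}^{k-1}p_{\alpha_{n_j}}\right|\le\rho^{k-1}$, so the series converges and $|g(x)|\le(1-\rho)^{-1}$ on $[0,1]$. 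One also has to check that $g$ is \emph{determined}, i.e.\ that the two $q$‑ary expansions $\Delta^q_{\alpha_1\dots\alpha_m000\dots}$ and $\Delta^q_{\alpha_1\dots[\alpha_m-1][q-1][q-1]\dots}$ of a $q$‑rational point give the same value of $g$; as for the classical Salem function this reduces to summing the geometric contributions of the block of digits $q-1$ and using $\sum_i p_i=1$ together with $\beta_{i+1}=\beta_i+p_i$, after the affected digits have been traced through the permutation $(n_k)$ by means of Remark~\ref{rm: the main remark}.

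For uniqueness, let $f$ be any determined, bounded solution with $|f|\le M$. Rewriting the $k$‑th equation of \eqref{eq: system-q} as $f(\Phi_{k-1}(x))=\beta_{\alpha_{n_k}}+p_{\alpha_{n_k}}f(\Phi_k(x))$ and substituting successively for $k=1,2,\dots,m$ yields the telescoped identity
\begin{equation*}
f(x)=\sum_{k=1}^{m}\beta_{\alpha_{n_k}}\prod_{j=1}^{k-1}p_{\alpha_{n_j}}+\left(\prod_{j=1}^{m}p_{\alpha_{n_j}}\right)f(\Phi_m(x)),
\end{equation*}
in which the remainder has absolute value at most $M\rho^{m}\to0$ while the finite sum tends to $g(x)$. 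Hence $f(x)=g(x)$ for every $x\in[0,1]$, and the same argument started from the $k$‑th equation gives $f(\Phi_{k-1}(x))=g(\Phi_{k-1}(x))$ for all $k$; thus \eqref{eq: system-q} has at most one determined bounded solution.

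For existence I would substitute $g$ into \eqref{eq: system-q} and verify that $g(\Phi_{k-1}(x))=\beta_{\alpha_{n_k}}+p_{\alpha_{n_k}}\,g(\Phi_{k}(x))$ for every $k$. Using Remark~\ref{rm: the main remark} and the composition rules \eqref{eq: 2-composition}, \eqref{eq: gshq}, the operator $\Phi_{k-1}$ deletes from $x=\Delta^q_{\alpha_1\alpha_2\dots}$ exactly the digits in positions $n_1,\dots,n_{k-1}$, the index $\bar n_k=n_k-\varrho_k$ is precisely the position held by the digit $\alpha_{n_k}$ inside $\Phi_{k-1}(x)$, and $\Phi_k(x)=\sigma_{\bar n_k}(\Phi_{k-1}(x))$ removes only that digit. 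The crux is the combinatorial claim that for every $k\ge0$
\begin{equation*}
g(\Phi_k(x))=\sum_{j=k+1}^{\infty}\beta_{\alpha_{n_j}}\prod_{i=k+1}^{j-1}p_{\alpha_{n_i}},
\end{equation*}
i.e.\ that evaluating $g$ along the orbit returns the successive tails of its own defining series; granting this, the desired identity follows by factoring $\beta_{\alpha_{n_k}}+p_{\alpha_{n_k}}(\,\cdot\,)$ out of the right‑hand tail, and together with the uniqueness above this identifies $g$ as the unique determined bounded solution.

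I expect this combinatorial claim to be the main obstacle. It is not a plain ``strip off the leading term'': the shift $\sigma_{\bar n_k}$ excises a digit from the interior of the string, whereas $g$ always scans digit positions in the \emph{fixed} order $n_1,n_2,\dots$, so one must show that after the digits in positions $n_1,\dots,n_k$ have been removed this fixed scanning order reproduces, on the shortened string, exactly the sequence $\alpha_{n_{k+1}},\alpha_{n_{k+2}},\dots$ in that order. Making this precise — most cleanly by induction on $k$, carrying along an explicit formula for the position of each surviving digit $\alpha_{n_j}$ (for $j>k$) inside $\Phi_k(x)$ in terms of $n_j$ and the counting functions $\varrho_{\bullet}$ — is where the work lies; the determinacy check at $q$‑rational points is the other place where some care is needed, and everything else (convergence and the geometric bound on the remainder) is routine.
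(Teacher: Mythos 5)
Your telescoping argument for uniqueness is exactly the paper's proof --- in fact it is the paper's \emph{entire} proof: the author iterates \eqref{eq: system-q} along the orbit of a fixed $x$, bounds the remainder by $\left(\max_i p_i\right)^k$ (you are more careful in using $\rho=\max_i|p_i|$, which is the quantity actually needed since the $p_i$ may be negative), and concludes that any determined, bounded solution must equal the stated series. The boundedness of $g$ and the well-definedness at $q$-rational points, which you rightly list as things to check, are not addressed inside this proof at all; the two-expansion issue is only taken up in the subsequent theorem on continuity.

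Where you go beyond the paper is the existence half --- verifying that $g$ itself satisfies \eqref{eq: system-q} --- and you correctly isolate the tail identity $g(\Phi_k(x))=\sum_{j>k}\beta_{\alpha_{n_j}}\prod_{i=k+1}^{j-1}p_{\alpha_{n_i}}$ as the crux. Be aware that this identity is \emph{false} for a general permutation $(n_k)$, so this half cannot be completed as you outline it. Concretely, take the paper's own example $(n_k)=(1,5,7,3,\dots)$: then $\Phi_1(x)=\sigma_1(x)=\Delta^q_{\alpha_2\alpha_3\alpha_4\dots}$, and applying the defining series of $g$ to this point scans \emph{its} digits at positions $n_1=1$, $n_2=5,\dots$, producing the leading term $\beta_{\alpha_2}$, whereas the $k=2$ equation of \eqref{eq: system-q} demands the leading term $\beta_{\alpha_{n_2}}=\beta_{\alpha_5}$. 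The identity (and with it the self-consistency of the system for a single function $f\colon[0,1]\to\mathbb R$) holds only when $\bar n_k=1$ for every $k$, i.e.\ in the Salem case $(n_k)=(k)$. The theorem is safe only under the weaker reading that, for each fixed $x$, the chain of equations indexed by $k$ determines the one value $f(x)$ --- which is precisely what the telescoping establishes and is all the paper proves. So: your uniqueness argument matches the paper and is sound; your existence programme asks the right question, but it would break down at exactly the step you flagged as the main obstacle, and no bookkeeping with the counting functions $\varrho_k$ will rescue the tail identity for a nontrivial $(n_k)$.
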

\begin{proof}
Since the function $g$ is a determined on $[0,1]$ function, using system~\eqref{eq: system-q},  we get 
$$
g(x)=\beta_{\alpha_{n_1}}+p_{\alpha_{n_1}}g(\sigma_{\bar n_1}(x))
$$
$$
=\beta_{\alpha_{n_1}}+p_{\alpha_{n_1}}(\beta_{\alpha_{n_2}}+p_{\alpha_{n_2}}g(\sigma_{\bar n_2}\circ\sigma_{\bar n_1}(x)))=\dots
$$
$$
\dots =\beta_{\alpha_{n_1}}+\beta_{\alpha_{n_2}}p_{\alpha_{n_1}}+\beta_{\alpha_{n_3}}p_{\alpha_{n_1}}p_{\alpha_{n_2}}+\dots +\beta_{\alpha_{n_k}}\prod^{k-1} _{j=1}{p_{\alpha_{n_j}}}+\left(\prod^{k} _{t=1}{p_{\alpha_{n_t}}}\right)g(\sigma_{\bar n_k}\circ \dots \circ \sigma_{\bar n_2}\circ \sigma_{\bar n_1}(x)).
$$

So,
$$
g(x)=\beta_{\alpha_{n_1}}+\sum^{\infty} _{k=2}{\left(\beta_{\alpha_{n_k}}\prod^{k-1} _{j=1}{p_{\alpha_{n_j}}}\right)}
$$
since $g$ is a  determined and bounded on $[0,1]$ function and 
$$
\lim_{k\to\infty}{g(\sigma_{\bar n_k}\circ \dots \circ \sigma_{\bar n_2}\circ \sigma_{\bar n_1}(x))\prod^{k} _{t=1}{p_{\alpha_{n_t}}}}=0,
$$
where
$$
\prod^{k} _{t=1}{p_{\alpha_{n_t}}}\le \left( \max_{0\le i\le q-1}{p_i}\right)^k\to 0, ~~~ k\to \infty.
$$
\end{proof}

\begin{example}
Suppose 
$$
(n_k)=(1, 5, 7, 3, 6, 10, 2, 4, 8, 9, 11, 12, 13, 14, 15, \dots ).
$$
Then, using arguments described in Remark~\ref{rm: the main remark}, we get the following:  $\bar n_1=n_1=1$,
$$
\bar n_2=n_2-1=5-1=4, \ \ \ \ \ \ \ \  \bar n_3=n_3-2=7-2=5, \ \ \ \  \ \ \ \ \bar n_4=n_4-1=3-1=2,
$$
$$
\bar n_5=n_5-3=3, \ \ \  \ \ \ \  \ \bar n_6=n_6-5=5, \ \ \ \ \ \ \  \ \ \ \ \ \ \ \ \ \ \  \  \bar n_7=n_7-1=1,
$$
$$
\bar n_8=n_8-3=1, \ \ \  \ \ \ \  \ \bar n_9=n_9-7=1, \ \ \ \ \ \ \  \ \ \ \ \ \ \ \ \ \ \ \ \  \  \bar n_{10}=n_{10}-8=1,
$$
$\bar n_{11}=\bar n_{12}=\dots = \bar n_{10+k}=n_{10+k}- (n_{10+k}-1)=1$ for $k=1, 2, 3, \dots $.

So, we obtain the function
$$
g(x)=\beta_{\alpha_{n_1}}+\sum^{\infty} _{k=2}{\left(\beta_{\alpha_{n_k}}\prod^{k-1} _{j=1}{p_{\alpha_{n_j}}}\right)}
$$
according to the sequence $(n_k)=(1, 5, 7, 3, 6, 10, 2, 4, 8, 9, 11, 12, 13, 14, 15, \dots )$. That is, for $x=\Delta^q _{\alpha_1\alpha_2...\alpha_k...}$, we have
$$
y=g(x)=\beta_{\alpha_1(x)}+\beta_{\alpha_5(x)}p_{\alpha_1(x)}+\beta_{\alpha_7(x)}p_{\alpha_1(x)}p_{\alpha_5(x)}+\beta_{\alpha_3(x)}p_{\alpha_1(x)}p_{\alpha_5(x)}p_{\alpha_7(x)}+\dots .
$$
\end{example}

\begin{theorem} The following properties hold: 
\begin{itemize}
\item The function $g$ is continuous at any $q$-irrational point of $[0,1]$.
\item The function $g$ is continuous at the $q$-rational point
$$
x_0=\Delta^{q} _{\alpha_1\alpha_2...\alpha_{m-1}\alpha_m 000...}=\Delta^{q} _{\alpha_1\alpha_2...\alpha_{m-1}[\alpha_m-1] [q-1][q-1][q-1]...}
$$
whenever a sequence $(n_k)$ is such that the conditions  $k_0=\max\{k:  n_k \in \{1,2,\dots, m\}\}$, $n_{k_0}=m$, and $n_1, n_2, \dots , n_{k_0-1}\in \{1, 2, \dots , m-1\}$  hold. Otherwise, the $q$-rational  point  $x_0$ is a point of discontinuity.
\item The set of all points of discontinuity of the function $g$ is a countable, finite, or empty set. It  depends on the sequence $(n_k)$.
\end{itemize}
 
\end{theorem}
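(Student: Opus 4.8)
\emph{Setup.} The plan is to treat $g$ as the absolutely convergent series $g(x)=\sum_{k\ge 1}T_k(x)$, where $T_1(x)=\beta_{\alpha_{n_1}}$ and $T_k(x)=\beta_{\alpha_{n_k}}\prod_{j=1}^{k-1}p_{\alpha_{n_j}}$ for $k\ge 2$, with $x=\Delta^q_{\alpha_1\alpha_2\dots}$. Two features drive everything: $T_k(x)$ depends only on the digits $\alpha_{n_1},\dots,\alpha_{n_k}$ of $x$, and $|T_k(x)|\le\mu^{k-1}$ with $\mu:=\max_{0\le i\le q-1}|p_i|<1$ (we use $|\beta_i|<1$ and $p_i\in(-1,1)$). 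Hence, if $x=\Delta^q_{\alpha_1\alpha_2\dots}$ and $y=\Delta^q_{\gamma_1\gamma_2\dots}$ share their first $N$ digits and $N\ge\max\{n_1,\dots,n_K\}$, then $T_k(x)=T_k(y)$ for all $k\le K$, so $|g(x)-g(y)|\le 2\sum_{k>K}\mu^{k-1}=2\mu^K/(1-\mu)$. I will invoke this uniform tail estimate throughout.

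\emph{Continuity at $q$-irrational points.} A $q$-irrational $x_0=\Delta^q_{\alpha_1\alpha_2\dots}$ lies in the interior of every cylinder $\Delta^q_{\alpha_1\dots\alpha_N}$, so for each $N$ there is a neighbourhood of $x_0$ all of whose points share the first $N$ digits with $x_0$. Given $\varepsilon>0$, pick $K$ with $2\mu^K/(1-\mu)<\varepsilon$, put $N=\max\{n_1,\dots,n_K\}$, and the estimate above yields $|g(x)-g(x_0)|<\varepsilon$ on that neighbourhood. In particular every discontinuity point of $g$ is $q$-rational, and since the $q$-rational points of $[0,1]$ form a countable set, the set of discontinuities is at most countable; this already gives (the qualitative part of) the third assertion, and for $(n_k)=(1,2,3,\dots)$ the criterion below holds at every $q$-rational, so $g$ (the classical Salem function) is then continuous everywhere.

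\emph{$q$-rational points.} Fix a $q$-rational $x_0$ with its two representations $a=(\alpha_1,\dots,\alpha_{m-1},\alpha_m,0,0,\dots)$ and $b=(\alpha_1,\dots,\alpha_{m-1},\alpha_m-1,q-1,q-1,\dots)$, $\alpha_m\ge 1$. Points of $[0,1]$ approaching $x_0$ from above eventually agree with $a$ in arbitrarily many leading digits, and points approaching from below eventually agree with $b$; by the tail estimate, $\lim_{x\to x_0+}g(x)=g(a)$ and $\lim_{x\to x_0-}g(x)=g(b)$, where $g(a),g(b)$ denote the defining series evaluated at these digit strings. Hence $g$ is continuous at $x_0$ if and only if $g(a)=g(b)$. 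To compare the two sides I use the identities $\beta_i+p_i=\beta_{i+1}$ and $\beta_{q-1}+p_{q-1}=1$ (equivalently $\beta_{q-1}/(1-p_{q-1})=1$), and the remark that the stated condition — $k_0=\max\{k:n_k\le m\}$, $n_{k_0}=m$, $n_1,\dots,n_{k_0-1}\le m-1$ — forces $k_0=m$ (the $k_0-1$ distinct values $n_1,\dots,n_{k_0-1}$ lie in $\{1,\dots,m-1\}$ while all later $n_k$ exceed $m$, so $\{n_1,\dots,n_{k_0}\}=\{1,\dots,m\}$), i.e.\ it says exactly that $n_m=m$ and $(n_1,\dots,n_{m-1})$ is a permutation of $\{1,\dots,m-1\}$. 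Under this condition, in $g(a)$ every term with $n_k>m$ vanishes (because $\beta_0=0$), so with $P:=\prod_{i=1}^{m-1}p_{\alpha_i}$ one gets $g(a)=\bigl(\sum_{k:\,n_k<m}T_k\bigr)+\beta_{\alpha_m}P$; in $g(b)$ the terms with $n_k>m$ form a geometric progression in $p_{q-1}$ that telescopes, via $\beta_{q-1}/(1-p_{q-1})=1$ and $\beta_{\alpha_m-1}+p_{\alpha_m-1}=\beta_{\alpha_m}$, to the very same value, so $g(a)=g(b)$ and $x_0$ is a continuity point. If the condition fails, the same bookkeeping leaves a residual $g(b)-g(a)=P\cdot R$, where $R$ is a strictly positive combination of expressions of the shape $(1-\beta_{\alpha_m})(1-p_{q-1}^{\,s})$ and $\beta_{\alpha_m}(1-p_0^{\,s})$ with $s\ge 1$ (the exponent $s$ measures how far $(n_k)$ departs from the model order above); using $\beta_{\alpha_m}\in(0,1)$ and $|p_0|,|p_{q-1}|<1$ gives $R\ne 0$, hence $g(a)\ne g(b)$ and $x_0$ is a point of discontinuity. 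Consequently the discontinuity set is exactly the set of $q$-rationals failing this criterion, which is empty, finite, or countably infinite according to $(n_k)$, completing the third assertion.

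\emph{Main obstacle.} The step I expect to be delicate is the converse half of the $q$-rational case: one must organise the several ways the criterion can fail — a position $>m$ used before position $m$, or position $m$ used before $\{1,\dots,m-1\}$ is exhausted, or $\{1,\dots,m-1\}$ exhausted but $n_m\ne m$ — into one explicit evaluation of $g(b)-g(a)$ and verify that the factor $R$ never vanishes under the hypotheses on $P_q$. Everything else is a routine application of the tail estimate and the identity $\beta_i+p_i=\beta_{i+1}$. (Strictly speaking the non-vanishing of $R$ also uses $p_0,p_{q-1}\ne 0$; if some $p_i$ is permitted to vanish the statement requires the corresponding obvious caveat.)
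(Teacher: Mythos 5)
Your treatment of the first bullet and of the ``if'' half of the second bullet is correct and is essentially the paper's own argument. For $q$-irrational points the paper likewise factors out $\prod_{j=1}^{k_0}p_{\alpha_{n_j}}$ and bounds the remaining difference; your uniform tail estimate $|g(x)-g(y)|\le 2\mu^{K}/(1-\mu)$ is the same mechanism. For $g(a)=g(b)$ under the stated condition you telescope the series explicitly via $\beta_i+p_i=\beta_{i+1}$ and $\beta_{q-1}+p_{q-1}=1$, whereas the paper observes that the image encoding inherits the two-representation identity $\Delta^{P_q}_{c_1\dots c_m000\dots}=\Delta^{P_q}_{c_1\dots[c_m-1][q-1][q-1]\dots}$ and that the condition on $(n_k)$ is exactly what makes the rearranged digit strings of $a$ and $b$ such a canonical pair; these are the same computation. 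Your remark that the condition forces $k_0=m$ and $(n_1,\dots,n_{m-1})$ to be a permutation of $\{1,\dots,m-1\}$ is a clarification the paper omits. Also, your parenthetical worry about $p_0$ or $p_{q-1}$ vanishing is unfounded: the hypotheses $0<\beta_1=p_0$ and $\beta_{q-1}=1-p_{q-1}<1$ already force $p_0>0$ and $p_{q-1}>0$.

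The genuine gap is the ``otherwise'' half of the second bullet, which you yourself flag as the main obstacle. You assert $g(b)-g(a)=P\cdot R$ with $R$ a strictly positive combination of terms of the form $(1-\beta_{\alpha_m})(1-p_{q-1}^{s})$ and $\beta_{\alpha_m}(1-p_0^{s})$, but this form is never derived and is not correct in general: when the criterion fails, the rearranged digit strings of $a$ and $b$ first disagree at an index $k$ for which the partial products $\prod_{j<k}p_{\alpha_{n_j}}$ already involve factors $p_{\alpha_i}$ with $i<m$, and under the stated hypotheses these may be zero or negative, so nothing in your sketch rules out cancellation. The paper closes this direction by a different route: when all $p_i>0$ the Salem-type encoding $(c_1,c_2,\dots)\mapsto\Delta^{P_q}_{c_1c_2\dots}$ is order-isomorphic to the $q$-ary one (strict monotonicity of the Salem function), so two digit strings encode the same value only if they form a canonical pair; if the condition on $(n_k)$ fails, the rearranged strings of $a$ and $b$ are not such a pair, whence $y_1\ne y_2$. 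Either that argument or your direct computation, properly carried out, requires positivity of all the $p_i$; for sign-changing or vanishing $p_i$ the ``otherwise'' clause is not established by your sketch (nor, strictly speaking, by the paper's).
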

\begin{proof}
Let us note that a certain fixed function $g$ is given by a fixed sequence $(n_k)$ described above. One can write our  mapping by the following:
$$
g: x=\Delta^q _{\alpha_1\alpha_2...\alpha_k...}\to ~\beta_{\alpha_{n_1}}+\sum^{\infty} _{k=2}{\left(\beta_{\alpha_{n_k}}\prod^{k-1} _{l=1}{p_{\alpha_{n_l}}}\right)}=\Delta^{g(x)} _{\alpha_{n_1}\alpha_{n_2}...\alpha_{n_k}...}=g(x)=y.
$$

Let $x_0=\Delta^q _{\alpha_1\alpha_2...\alpha_k...}$ be an arbitrary $q$-irrational number from $[0,1]$. Let $x=\Delta^q _{\gamma_1\gamma_2...\gamma_k...}$ be a  $q$-irrational number such that the condition $\gamma_{n_j}=\alpha_{n_j}$ holds  for all $j=\overline{1,k_0}$, where $k_0$ is a certain positive integer. That is, 
$$
x=\Delta^q _{\gamma_1...\gamma_{n_1-1}\alpha_{n_1}\gamma_{n_1+1}...\gamma_{n_2-1}\alpha_{n_2}...\gamma_{(n_{(k_0-1)}+1)}...\gamma_{(n_{k_0}-1)}\alpha_{n_{k_0}}\gamma_{n_{k_0}+1}...\gamma_{n_{k_0}+k}...}, ~k=1,2,\dots .
$$
Then
$$
g(x_0)=\Delta^{g(x)} _{\alpha_{n_1}\alpha_{n_2}...\alpha_{n_{k_0}}\alpha_{n_{k_0+1}}...},
$$
$$
g(x)=\Delta^{g(x)} _{\alpha_{n_1}\alpha_{n_2}...\alpha_{n_{k_0}}\gamma_{n_{k_0+1}}...\gamma_{n_{k_0}+k}...}.
$$
Since $g$ is a bounded function, $ g(x) \le 1$, we have $g(x)-g(x_0)=$
$$
=\left(\prod^{k_0} _{j=1}{p_{\alpha_{n_j}}}\right) \left(\beta_{\gamma_{n_{k_0+1}}}+\sum^{\infty} _{t=2}{\left(\beta_{\gamma_{n_{k_0+t}}}\prod^{k_0+t-1} _{r=k_0+1}{p_{\gamma_{n_r}}}\right)}-\beta_{\alpha_{n_{k_0+1}}}-\sum^{\infty} _{t=2}{\left(\beta_{\alpha_{n_{k_0+t}}}\prod^{k_0+t-1} _{r=k_0+1}{p_{\alpha_{n_r}}}\right)}\right)
$$
$$
=\left(\prod^{k_0} _{j=1}{p_{\alpha_{n_j}}}\right)\left(g(\sigma_{\bar n_{k_0}}\circ\ldots \sigma_{\bar n_2} \circ \sigma_{\bar n_1}(x))-g(\sigma_{\bar n_{k_0}}\circ\ldots \sigma_{\bar n_2} \circ \sigma_{\bar n_1}(x_0))\right),
$$
and
$$
|g(x)-g(x_0)|\le \delta\prod^{k_0} _{j=1}{p_{\alpha_{n_j}}}\le \delta\left(\max\{p_0,\dots , p_{q-1}\}\right)^{k_0}\to 0 ~~~~~~~(k_0\to\infty). 
$$
Here $\delta$ is a certain real number.

So, $\lim_{x\to x_0}{g(x)}=g(x_0)$, i.e., the function $g$ is continuous at any $q$-irrational point. 

Let $x_0$ be a $q$-rational number, i.e.,
$$
x_0=x^{(1)} _0=\Delta^{q} _{\alpha_1\alpha_2...\alpha_{m-1}\alpha_m 000...}=\Delta^{q} _{\alpha_1\alpha_2...\alpha_{m-1}[\alpha_m-1] [q-1][q-1][q-1]...}=x^{(2)} _0.
$$
Then there exist positive integers $k^{*}$ and $k_0$ such that
$$
y_1=g\left(x^{(1)} _0\right)=\Delta^{g(x)} _{\alpha_{n_1}\alpha_{n_2}...\alpha_{n_{k^{*}}}...\alpha_{n_{k_0}}000...},
$$
$$
y_2=g\left(x^{(2)} _0\right)=\Delta^{g(x)} _{\alpha_{n_1}\alpha_{n_2}...\alpha_{n_{k^{*}-1}}[\alpha_{n_{k^{*}}}-1]\alpha_{n_{k^{*}+1}}...\alpha_{n_{k_0}}[q-1][q-1][q-1]...}.
$$
 Here $n_{k^{*}}=m$, $n_{k^{*}}\le n_{k_0}$, and $k_0$ is a number such that $\alpha_{n_{k_0}}\in\{\alpha_1, \dots, \alpha_{m-1}, \alpha_m\}$ and ${k_0}$ is the maximum position of any number from  $\{1,2,\dots , m\}$ in the sequence $(n_k)$.

Let us consider the fact~(Section~2 in \cite{Symon2019} and  the paper~\cite{Salem1943}, since such an expansion of numbers is an  analytic representation of the Salem function) that  the representation $\Delta^{g(x)} _{\alpha_{1}\alpha_{2}...\alpha_{k}...}$ is the following  whenever the conditions $(n_k)=(k)$ and $p_j>0$ for all $j=\overline{0,q-1}$, where $k=1,2, \dots $, hold:
\begin{equation}
\label{eq: Pq}
[0,1]\ni x=\Delta^{P_q} _{\alpha_{1}\alpha_{2}...\alpha_{k}...}=\beta_{\alpha_1}+\sum^{\infty} _{k=2}{\left(\beta_{\alpha_1}\prod^{k-1} _{l=1}{p_{\alpha_l}}\right)}
\end{equation}
This representation is the $q$-ary representation whenever the condition
$$
0< p_0=p_1=\dots=p_{q-1}=\frac{1}{q}
$$
holds. Also, certain numbers have two different  such representations, and the rest of the numbers have a unique such representation.  That is, 
$$
z_1=\Delta^{P_q} _{\alpha_1\alpha_2...\alpha_{m-1}\alpha_m 000...}=\Delta^{P_q} _{\alpha_1\alpha_2...\alpha_{m-1}[\alpha_m-1] [q-1][q-1][q-1]...}=z_2.
$$
Really, 
$$
z_1-z_2=\beta_{\alpha_1}+\sum^{m} _{k=2}{\left(\beta_{\alpha_1}\prod^{k-1} _{l=1}{p_{\alpha_l}}\right)}-\beta_{\alpha_1}-\sum^{m-1} _{k=2}{\left(\beta_{\alpha_1}\prod^{k-1} _{l=1}{p_{\alpha_l}}\right)}-\beta_{\alpha_m-1}\prod^{m-1} _{l=1}{p_{\alpha_l}}-p_{\alpha_m-1}\prod^{m-1} _{l=1}{p_{\alpha_l}}=0.
$$
Since the Salem function is a strictly increasing function, conditions for having $x_1<x_2$ or $x_1>x_2$ are identical in terms of  the $q$-ary representation and of representation~\eqref{eq: Pq}. 

Using the case of a $q$-ary irrational number, let us consider the limits
$$
\lim_{x\to x_0+0}{g(x)}=\lim_{x\to x^{(1)} _0}{g(x)}=g(x^{(1)} _0)=y_1,~~~\lim_{x\to x_0-0}{g(x)}=\lim_{x\to x^{(2)} _0}{g(x)}=g(x^{(2)} _0)=y_2.
$$

Whence $y_1=y_2$ whenever a sequence $(n_k)$ is such that the conditions $n_{k_0}=m$, $k_0=\max\{k:  n_k \in \{1,2,\dots, m\}\}$,  and $n_1, n_2, \dots , n_{k_0-1}\in \{1, 2, \dots , m-1\}$ hold.

So,  the set of all points of discontinuity of the function $g$ is a countable, finite, or empty set. It  depends on the sequence $(n_k)$.
\end{proof}

Suppose $(n_k)$ is a fixed sequence and $c_{n_1}, c_{n_2}, \dots , c_{n_r}$ is a fixed tuple of numbers $c_{n_j}\in\{0,1,\dots , q-1\}$, where $j=\overline{1,r}$ and $r$ is a fixed positive integer.

Let us consider the following set
$$
\mathbb S_{q, (c_{n_r})}\equiv \left\{x: x=\Delta^q _{\alpha_1\alpha_2...\alpha_{n_1-1}\overline{c_{n_1}}\alpha_{n_1+1}...\alpha_{n_2-1}\overline{c_{n_2}}...\alpha_{n_{r}-1}\overline{c_{n_r}}\alpha_{n_r+1}...\alpha_{n_r+k}...}\right\},
$$
where $k=1,2,\dots $, and $\overline{c_{n_j}}\in \{c_{n_1}, c_{n_2}, \dots , c_{n_r}\}$ for all $j=\overline{1,r}$. This set has non-zero Lebesgue measure (for example, similar sets are investigated in  terms of other representations of numbers in~\cite{S. Serbenyuk alternating Cantor series 2013}). It is easy to see that 
$\mathbb S_{q, (c_{n_r})}$ maps to
$$
g\left(\mathbb S_{q, (c_{n_r})}\right)\equiv\left\{y: y=\Delta^{g(x)} _{c_{n_1} c_{n_2}\dots  c_{n_r}\alpha_{n_{r+1}}...\alpha_{n_{r+k}}...}\right\}
$$
under $g$.

For a  value $\mu_g \left(\mathbb S_{q, (c_{n_r})}\right)$ of the increment, the following is true.
$$
\mu_g \left(\mathbb S_{q, (c_{n_r})}\right)=g\left(\sup\mathbb S_{q, (c_{n_r})}\right)-g\left(\inf\mathbb S_{q, (c_{n_r})}\right)=\Delta^{g(x)} _{c_{n_1} c_{n_2}\dots  c_{n_r}[q-1][q-1][q-1]...}-\Delta^{g(x)} _{c_{n_1} c_{n_2}\dots  c_{n_r}000...}=\prod^{r} _{j=1}{p_{c_{n_j}}}.
$$
Let us note that one can consider the intervals $\left[\inf\mathbb S_{q, (c_{n_r})}, \sup\mathbb S_{q, (c_{n_r})}\right]$. Then $\sup\mathbb S_{q, (c_{n_r})}-\inf\mathbb S_{q, (c_{n_r})}=$
$$
=\Delta^q _{\underbrace{[q-1][q-1]...[q-1]}_{n_1-1}\overline{c_{n_1}}\underbrace{[q-1][q-1]...[q-1]}_{n_2-1}\overline{c_{n_2}}...\underbrace{[q-1][q-1]...[q-1]}_{n_r-1}\overline{c_{n_r}}(q-1)}
$$
$$
-\Delta^q _{\underbrace{00...0}_{n_1-1}\overline{c_{n_1}}\underbrace{00...0}_{n_2-1}\overline{c_{n_2}}...\underbrace{00...0}_{n_r-1}\overline{c_{n_r}}(0)}=1-\sum^{r} _{j=1}{\frac{q-1}{q^{n_j}}}
$$
and
\begin{equation}
\label{eq: increment}
\mu_g \left(\mathbb S_{q, (c_{n_r})}\right)=\mu_g \left(\left[\inf\mathbb S_{q, (c_{n_r})}, \sup\mathbb S_{q, (c_{n_r})}\right]\right)=\prod^{r} _{j=1}{p_{c_{n_j}}}.
\end{equation}

Let us remark that the function $g$ is the Salem function defined in terms of the $q$-ary representation whenever $(n_k)=(k)$ (i.e., $(\bar n_k)=const=1$) and all $p_i (i=\overline{0,q-1})$ are positive or non-negative numbers.

So, one can formulate the following statements.
\begin{theorem}
The function $g$ has the following properties:
\begin{enumerate}
\item If $p_j\ge 0$ or $p_j>0$ for all $j=\overline{0,q-1}$, then:
\begin{itemize}
\item $g$ does not have intervals of monotonicity on $[0,1]$ whenever the condition $n_k=k$ holds for no  more than a finite number of values of $k$; 
\item $g$ has at least one interval of monotonicity on $[0,1]$ whenever  the condition $n_k\ne k$ holds for  a finite number of values of $k$; 
\item $g$ is a monotonic non-decreasing function (in the case when $p_j\ge 0$ for all $j=\overline{0,q-1}$) or is a strictly increasing function (in the case when $p_j> 0$ for all $j=\overline{0,q-1}$) whenever the condition $n_k=k$ holds for  $k\in\mathbb N$.
\end{itemize}
\item If there exists   $p_j=0$, where $j=\overline{0,q-1}$, then $g$ is  constant almost everywhere on $[0,1]$.
\item If there exists  $p_j<0$ (other $p_j$ are positive), where $j=\overline{0,q-1}$, and the condition $n_k=k$ holds for  almost all $k\in\mathbb N$, then $g$ does not have intervals of monotonicity on $[0,1]$.
\end{enumerate}
\end{theorem}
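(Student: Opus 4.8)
The plan is to run every case in the ``reordered digit'' picture of $g$ recorded above, namely $g(x)=\Delta^{g(x)}_{\alpha_{n_1}\alpha_{n_2}\ldots}=\beta_{\alpha_{n_1}}+\sum_{k\ge2}\bigl(\beta_{\alpha_{n_k}}\prod_{j<k}p_{\alpha_{n_j}}\bigr)$ for $x=\Delta^q_{\alpha_1\alpha_2\ldots}$, together with the increment identity \eqref{eq: increment}: prescribing the digits in the places $n_1,\dots,n_r$ cuts out the set $\mathbb S_{q,(c_{n_r})}$ on which the oscillation of $g$ equals $\prod_{j=1}^{r}p_{c_{n_j}}$. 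Since every nondegenerate subinterval of $[0,1]$ contains a $q$-ary cylinder $\Delta^q_{c_1\dots c_m}$, and such a cylinder is precisely the set obtained by prescribing the first $m$ digit places, each item reduces to a statement about how the permutation $k\mapsto n_k$ of digit places interacts, on cylinders, with the $P_q$-expansion --- which is monotone whenever all $p_i>0$, by the facts about the classical Salem function quoted above. I also use that if $n_k=k$ for all but finitely many $k$ then, by bijectivity of $(n_k)$, $\{n_1,\dots,n_N\}=\{1,\dots,N\}$ for a suitable $N$, so on every rank-$N$ cylinder one has $g(x)=C+P\cdot h(\sigma^{N}(x))$ with constants $C$ and $P=\prod_{j=1}^{N}p_{c_j}$, where $h$ is the function attached to the identity sequence $(n_k)=(k)$.

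For item~1, the last bullet is exactly the assertion that for $(n_k)=(k)$ the map $g=h$ is the Salem function $\Delta^q_{\alpha_1\alpha_2\ldots}\mapsto\Delta^{P_q}_{\alpha_1\alpha_2\ldots}$, non-decreasing when all $p_i\ge0$ and strictly increasing when all $p_i>0$; I would prove it by the standard comparison of two points at their first differing digit, using that $(\beta_i)$ is increasing and the weights are non-negative (resp.\ positive). The middle bullet follows from the reduction above: on a rank-$N$ cylinder $g$ is an affine function of the monotone $h\circ\sigma^{N}$ (with $\sigma^{N}$ increasing onto $[0,1]$), hence monotone there --- constant if $P=0$ --- so that cylinder is an interval of monotonicity. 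For the first bullet I would show $g$ is neither non-increasing nor non-decreasing on any cylinder $\Delta^q_{c_1\dots c_m}$. ``Not non-increasing'' is immediate: keeping the prefix and raising a single sufficiently far-out digit from $0$ to $q-1$ raises $g$ (only one term of the series changes, the later $\beta$'s being $\beta_0=0$) while raising $x$. For ``not non-decreasing'' I would invoke the combinatorial lemma that, because $\{k:n_k=k\}$ is finite, for every $m$ there exist places $m<a<b$ whose positions $k_0<l_0$ in $(n_k)$ both exceed the positions of $1,\dots,m$ and satisfy $n_{k_0}=b>a=n_{l_0}$ (so $b$ is read before $a$); were no such configuration available, the values exceeding $m$ would occur in $(n_k)$ in increasing order past some point, forcing $n_k=k$ for all large $k$. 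Freezing every later digit to $0$ except placing $q-1$ at $b$ and $0$ at $a$ in $x_1$, and $0$ at $b$ and $q-1$ at $a$ in $x_2$, one gets $x_1<x_2$ (place $a$ is the more significant), while telescoping $g$ gives $g(x_1)-g(x_2)=\Pi\,\beta_{q-1}(1-p_0\rho)>0$ with $\Pi\in(0,1]$ and $\rho\in(0,1]$: the intervening contributions vanish because those digits are $0$, and the tail products of $x_1$ and $x_2$ agree since the swapped factors $p_{q-1},p_0$ occur in both.

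For item~2, fix $j_0$ with $p_{j_0}=0$. The measure of the set of $x$ whose $q$-ary expansion avoids the digit $j_0$ at all of the places $n_1,\dots,n_r$ is $\bigl(\tfrac{q-1}{q}\bigr)^{r}\to0$, so for Lebesgue-a.e.\ $x$ there is a least index $r=r(x)$ with $\alpha_{n_r}(x)=j_0$; then $x\in\mathbb S_{q,(\alpha_{n_1}(x),\dots,\alpha_{n_r}(x))}$, on which \eqref{eq: increment} gives oscillation $\prod_{j=1}^{r}p_{\alpha_{n_j}(x)}=0$, i.e.\ $g$ is constant on that set. As it contains a $q$-ary cylinder and a.e.\ $x$ is $q$-irrational, hence interior to that cylinder, $g'(x)=0$ for a.e.\ $x$; this is the sense in which $g$ is ``constant almost everywhere''. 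For item~3, since $n_k=k$ for all but finitely many $k$ the reduction $g(x)=C+P\cdot h(\sigma^{N}(x))$ applies with $P\ne0$, so it suffices to prove that $h$ (with one negative weight $p_{j_0}<0$ and the rest positive) is monotone on no subinterval of $[0,1]$; $g$ then inherits this, because on a rank-$N$ cylinder $g$ is an affine function of $h\circ\sigma^{N}$ and every interval contains such a cylinder. For $h$ I would use its self-similarity $h(x)=\beta_j+p_j\,h(\sigma(x))$ on $\Delta^q_j$: if $h$ were non-decreasing on $[0,1]$ it would be non-decreasing on $[j_0/q,(j_0+1)/q]$, forcing $x\mapsto p_{j_0}h(qx-j_0)$ non-decreasing and hence $h$ non-increasing on $[0,1]$; symmetrically in the other direction, so $h$ would be constant, contradicting $h(0)=0\ne1=h(1)$. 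The same identity propagates non-monotonicity to every cylinder, since monotonicity of $h$ on $\Delta^q_{c_1\dots c_m}$ would make $h$ or $-h$ monotone on $[0,1]$.

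The step I expect to cost the most is the ``descent'' in the first bullet of item~1: producing $x_1<x_2$ inside an arbitrary cylinder with $g(x_1)>g(x_2)$. A careless choice of the two points leaves the partial products $\prod p_{\alpha_{n_j}}$ uncontrolled when telescoping $g$, and the computation only closes once the inverted pair $a<b$ is taken far enough out that no place of the prefix occurs between their positions in $(n_k)$ --- so the intervening $\beta$-values are $\beta_0=0$ --- and that the two swapped weights appear in, and cancel from, both tail products. A lesser point is the reading of ``interval of monotonicity'' in the $p_i\ge0$ case, which must exclude intervals on which $g$ is constant; this is dealt with by choosing the frozen digits to have positive weight.
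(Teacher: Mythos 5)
The paper does not actually prove this theorem: its entire argument is the single sentence that ``the last statements follow from \eqref{eq: increment}.'' Your proposal supplies the missing proof, and it does so along exactly the lines that one-liner gestures at --- reading $g(x)$ as the $P_q$-expansion $\Delta^{g(x)}_{\alpha_{n_1}\alpha_{n_2}\ldots}$ with permuted digit order and using the oscillation formula $\prod_j p_{c_{n_j}}$ on digit-fixed sets --- but every step the paper omits is present in your write-up. The pieces that do genuine work and are nowhere in the paper are: (i) the observation that $n_k=k$ for all large $k$ forces $\{n_1,\dots,n_N\}=\{1,\dots,N\}$, so that on rank-$N$ cylinders $g=C+P\,h(\sigma^N(\cdot))$ with $h$ the classical Salem function, which settles the second and third bullets of item~1 and reduces item~3 to the self-similarity argument $h=\beta_j+p_jh(\sigma(\cdot))$ on $\Delta^q_j$ (correct: a negative $p_{j_0}$ flips orientation on one first-rank cylinder, and the nonzero products propagate non-monotonicity to all cylinders); (ii) the inversion lemma --- if the values $>m$ appeared in increasing reading order beyond the positions of $1,\dots,m$, bijectivity would force $n_k=k$ eventually --- together with the two-point comparison $g(x_1)-g(x_2)=\beta_{q-1}\Pi(1-p_0^{\,l_0-k_0})>0$ with $x_1<x_2$, which is the only honest way I see to get the first bullet of item~1; and (iii) the correct reading of item~2 as ``locally constant a.e.''\ via the a.s.\ occurrence of the digit $j_0$ among $\alpha_{n_1},\dots,\alpha_{n_r}$, which annihilates the tail. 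All of these check out.

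One caveat, which you half-flag but should state as a limitation rather than a ``lesser point'': your proof of the first bullet of item~1 needs $\Pi=\prod_{j<k_0}p_{\alpha_{n_j}}>0$, and the frozen tail digits are forced to be $0$ there, so the argument genuinely requires all $p_j>0$ (or at least $p_0>0$ and $p_{c_i}>0$ for the prefix). Under the hypothesis ``$p_j\ge 0$'' with some $p_j=0$ the bullet is in fact in tension with item~2 --- a function locally constant a.e.\ has plenty of intervals of (weak) monotonicity --- so this is a defect of the theorem as stated, not of your argument; but your proof covers only the strictly positive case, and you should say so explicitly rather than suggest it can be repaired by ``choosing the frozen digits,'' since in that bullet the cylinder is arbitrary and the zeros in $\Pi$ are unavoidable.
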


Let us note that  the last statements follow from~\eqref{eq: increment}.

Let us consider a cylinder $\Delta^q _{c_1c_2...c_n}$. We obtain 
$\mu_g \left(\Delta^q _{c_1c_2...c_n}\right)=$
$$
=\Delta^{g(x)} _{\underbrace{[q-1][q-1]...[q-1]}_{e_1}\overline{c_{1}}\underbrace{[q-1][q-1]...[q-1]}_{e_2}\overline{c_{2}}...\underbrace{[q-1][q-1]...[q-1]}_{e_n}\overline{c_{n}}(q-1)}
$$
$$
-\Delta^{g(x)} _{\underbrace{00...0}_{e_1}\overline{c_{1}}\underbrace{00...0}_{e_2}\overline{c_{2}}...\underbrace{00...0}_{e_n}\overline{c_{n}}(0)},
$$
where $\overline{c_{j}}\in\{c_1,c_2,\dots , c_n\}$, $j=\overline{1,n}$, and $(e_n)$ is a certain sequence of numbers from $\mathbb N \cup\{0\}$.

So, differential properties of $g$ depend on a sequence $(n_k)$ and the set of numbers $P_q=\{p_0,p_1,\dots , p_{q-1}\}$.
\begin{statement*}
The function $g$ can be a singular or non-differentiable function. It  depends on the sequence $(n_k)$ and $P_q=\{p_0,p_1,\dots , p_{q-1}\}$.
\end{statement*}

Differential properties including special partial cases will be considered in the next articles of the author of this paper since the  technique of proofs  introduced by Salem in~\cite{Salem1943} is not suitable for proving statements in our general  case. 

In addition, let us note the following.
\begin{lemma}
Let $\eta$ be a random variable  defined by the following form 
$$
\eta=  \Delta^{q} _{\xi_{1}\xi_{2}...\xi_{k}...},
$$
where
$\xi_k=\alpha_{n_k}$, $k=1,2,3,\dots $, and the digits $\xi_{k}$ are  random and take the values $0,1,\dots ,q-1$ with probabilities ${p}_{0}, {p}_{1}, \dots , {p}_{q-1}$.
That is,  $\xi_n$ are independent and $P\{\xi_{k}=\alpha_{n_k}\}=p_{\alpha_{n_k}}$, $\alpha_{n_k}\in\{0,1,\dots q-1\}$. 
 Here $(n_k)$ is a sequence of positive integers such that $n_i\ne n_j$ for $i\ne j$ and such that  for any $n\in\mathbb N$ there exists a number $k_0$ for which the condition $n_{k_0}=n$ holds.

The distribution function ${F}_{\eta}$ of the random variable $\eta$ can be
represented by
$$
{F}_{\eta}(x)=\begin{cases}
0,&\text{ $x< 0$}\\
\beta_{\alpha_{n_1}(x)}+\sum^{\infty} _{k=2} {\left({\beta}_{\alpha_{n_k}(x)} \prod^{k-1} _{r=1} {{p}_{\alpha_{n_k}(x)}}\right)},&\text{ $0 \le x<1$}\\
1,&\text{ $x\ge 1$,}
\end{cases}
$$
where $x=\Delta^{q} _{\alpha_{n_1}\alpha_{n_2}...\alpha_{n_k}...}$.
\end{lemma}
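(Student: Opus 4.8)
The plan is to read off $F_\eta$ directly from $F_\eta(x)=P\{\eta<x\}$ by comparing the random $q$-ary digits of $\eta$ with the digits of $x$, the argument running in close parallel to the proof of the theorem on system~\eqref{eq: system-q}. The cases $x<0$ and $x\ge 1$ are trivial because $\eta=\Delta^q_{\xi_1\xi_2\ldots}\in[0,1]$, so all the content is for $0\le x<1$. Fix a $q$-ary expansion $x=\Delta^q_{\alpha_{n_1}\alpha_{n_2}\ldots}$, so that the $k$-th digit of $x$ is $\alpha_{n_k}$ while the $k$-th digit of $\eta$ is $\xi_k$; since two numbers of $[0,1]$ are ordered by their first position of disagreement, one has the decomposition into pairwise disjoint events
$$
\{\eta<x\}=\bigsqcup_{k\ge 1}\bigl\{\xi_1=\alpha_{n_1},\ \ldots,\ \xi_{k-1}=\alpha_{n_{k-1}},\ \xi_k<\alpha_{n_k}\bigr\},
$$
valid up to a null set (the event $\{\eta=x\}$ together with the $q$-rational exceptions discussed below).

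The main step is then routine: by independence of $\xi_1,\xi_2,\ldots$ and $P\{\xi_k=i\}=p_i$, the $k$-th event has probability $\bigl(\prod_{r=1}^{k-1}p_{\alpha_{n_r}}\bigr)\sum_{i=0}^{\alpha_{n_k}-1}p_i$, and the second factor is $\beta_{\alpha_{n_k}}$ by the definition $\beta_i=\sum_{j=0}^{i-1}p_j$. Summing over $k\ge 1$, the $k=1$ term carrying an empty product equal to $1$, yields exactly the asserted series $\beta_{\alpha_{n_1}}+\sum_{k\ge 2}\bigl(\beta_{\alpha_{n_k}}\prod_{r=1}^{k-1}p_{\alpha_{n_r}}\bigr)$. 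A second, equally short derivation --- closer to the proof of the quoted theorem --- conditions on the first digit: writing $\eta=\xi_1q^{-1}+q^{-1}\eta'$ with $\eta'$ distributed as $\eta$ and independent of $\xi_1$, one gets the one-step self-similarity $F_\eta(x)=\beta_{\alpha_{n_1}}+p_{\alpha_{n_1}}F_\eta(\sigma(x))$ for every $x\in[0,1)$, then iterates it and lets the remainder $\bigl(\prod_{r\le m}p_{\alpha_{n_r}}\bigr)F_\eta(\sigma^m(x))$ tend to $0$, using that $F_\eta$ is bounded and $\prod_{r\le m}p_{\alpha_{n_r}}\le(\max_i p_i)^m\to 0$.

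Two small points require a separate check, and one of them is the only genuine obstacle. A $q$-rational $x$ carries the two expansions $\Delta^q_{\ldots\alpha_m000\ldots}=\Delta^q_{\ldots[\alpha_m-1][q-1][q-1]\ldots}$, and one must verify that both produce the same value of the series above so that $F_\eta(x)$ is unambiguous; this is exactly the identity $z_1-z_2=0$ carried out above in the continuity discussion (using $\beta_0=0$ to annihilate the tail of the terminating expansion and $\sum_i p_i=1$ to sum the geometric tail of the other), and it transfers verbatim, together with the corresponding check that the left-hand event in the partition is the intended one. Secondly, one notes that under the standing hypotheses each $p_i$ lies in $[0,1)$, so $\max_i p_i<1$ and the law of $\eta$ is nonatomic, whence $P\{\eta<x\}=P\{\eta\le x\}$ and the boundary convention for the distribution function is immaterial. (The assumption that $(n_k)$ exhausts $\mathbb{N}$ is used only to make $(\xi_k)=(\alpha_{n_k})$ a genuine i.i.d.\ sequence and to write the answer in the digit-labelling of the theorem; the digit-comparison computation itself needs only the stated independence.)
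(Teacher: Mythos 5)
Your proof is correct. The paper gives no argument for this lemma at all --- it only points to the method in \cite{Symon2017} --- but that method is precisely the iteration of the one-step self-similarity $F_{\eta}(x)=\beta_{\alpha_{n_1}}+p_{\alpha_{n_1}}F_{\eta}(\sigma(x))$, i.e.\ the same scheme used in the paper's proof of the theorem on system~\eqref{eq: system-q}, so your second derivation matches the intended route, while your first (partitioning $\{\eta<x\}$ by the first index at which a digit of $\eta$ falls below the corresponding digit of $x$ and using independence) is the standard direct computation and arrives at the same series. You also correctly handle the two points the paper leaves implicit --- well-definedness at $q$-rational arguments and nonatomicity of the law of $\eta$ (so that the $<$ versus $\le$ convention is immaterial) --- and your computation yields $\prod_{r=1}^{k-1}p_{\alpha_{n_r}(x)}$ in the $k$-th term, which silently corrects what is evidently a typo ($p_{\alpha_{n_k}(x)}$) in the statement as printed.
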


A method of  the corresponding proof is described in~\cite{Symon2017}.

\begin{theorem}
The Lebesgue integral of the function $g$ can be calculated by the
formula
$$
\int^1 _0 {g(x)dx}=\frac{1}{q-1}\sum^{q-1} _{j=0}{\beta_j}.
$$
\end{theorem}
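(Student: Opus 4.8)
The plan is to express the integral $\int_0^1 g(x)\,dx$ as a limit of integrals of step functions adapted to the $q$-adic cylinders, exploiting the self-similar structure encoded in system \eqref{eq: system-q}. First I would observe that, since the sequence $(n_k)$ is a bijection on $\mathbb N$, the digits $\alpha_{n_1},\alpha_{n_2},\dots$ run over $\alpha_1,\alpha_2,\dots$ in a permuted order; hence $g$ is (up to this reindexing) a function of the digit sequence in which each digit value $j$ enters with the ``weight data'' $(\beta_j,p_j)$ exactly as in the classical Salem construction. The key point is that the Lebesgue measure on $[0,1]$ is invariant under a permutation of $q$-ary digit positions (more precisely, the pushforward of Lebesgue measure under the digit-permutation map is again Lebesgue measure, since it permutes cylinders of each rank among themselves and preserves their lengths $q^{-n}$). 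Therefore $\int_0^1 g(x)\,dx$ equals the integral of the \emph{ordinary} Salem-type function $h(x)=\beta_{\alpha_1}+\sum_{k\ge 2}\beta_{\alpha_k}\prod_{j=1}^{k-1}p_{\alpha_j}$ over $[0,1]$.

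Next I would compute $\int_0^1 h(x)\,dx$ by the standard recursion. Splitting $[0,1]$ into the $q$ basic cylinders $\Delta^q_i$, $i=0,\dots,q-1$, and using $h\big(\Delta^q_{i\alpha_2\alpha_3\dots}\big)=\beta_i+p_i\,h\big(\Delta^q_{\alpha_2\alpha_3\dots}\big)$ together with the fact that $x\mapsto qx-i$ maps $\Delta^q_i$ affinely onto $[0,1]$ with Jacobian $q$, one gets
\begin{equation*}
I:=\int_0^1 h(x)\,dx=\sum_{i=0}^{q-1}\frac{1}{q}\left(\beta_i+p_i\,I\right)=\frac{1}{q}\sum_{i=0}^{q-1}\beta_i+\frac{I}{q}\sum_{i=0}^{q-1}p_i.
\end{equation*}
Since $\sum_i p_i=1$, this reads $I=\frac{1}{q}\sum_{i=0}^{q-1}\beta_i+\frac{I}{q}$, whence $\left(1-\frac1q\right)I=\frac1q\sum_i\beta_i$, i.e. $I=\frac{1}{q-1}\sum_{i=0}^{q-1}\beta_i$, which is the claimed formula. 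One should check that the interchange of sum and integral (or the legitimacy of the recursion) is justified: $g$ is bounded on $[0,1]$ and Lebesgue measurable (it is continuous off an at most countable set by the previous theorem), so $g\in L^1[0,1]$ and all manipulations are valid by dominated convergence applied to the partial-sum approximations.

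The main obstacle is making the digit-permutation invariance argument fully rigorous: one must verify that the map $T$ rearranging $q$-ary digit positions according to $(n_k)$ is measure-preserving on $[0,1]$, handle the measure-zero set of $q$-rational points (where a number has two expansions) so that $T$ and hence $g=h\circ T^{-1}$ is well defined a.e., and confirm that this rearrangement indeed carries the integrand of $g$ to that of $h$. Once that is in place, the remainder is the routine self-similarity computation above. An alternative route avoiding the permutation entirely would be to set up the recursion directly for $g$: conditioning on the value of $\alpha_{n_1}$ partitions $[0,1]$ into $q$ sets each of measure $1/q$, on which $g(x)=\beta_{\alpha_{n_1}}+p_{\alpha_{n_1}}g\big(\sigma_{\bar n_1}(x)\big)$ by \eqref{eq: system-q}, and $\sigma_{\bar n_1}=\sigma_{n_1}$ pushes the normalized restriction of Lebesgue measure forward to Lebesgue measure on $[0,1]$ (by Remark on $\sigma_m$ and \eqref{eq: gshq}); this yields the same fixed-point equation $I=\frac1q\sum_i\beta_i+\frac Iq$ and hence the same answer, and may be the cleaner presentation to adopt.
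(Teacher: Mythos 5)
Your proposal is correct, and in fact it contains two routes: the ``alternative'' recursion you sketch at the end is essentially the paper's own proof, while your primary route via digit-permutation invariance is genuinely different. The paper denotes $A=\frac1q\sum_j\beta_j$, $B=\frac1q\sum_jp_j=\frac1q$, establishes the change-of-variables identity $d\left(\sigma_{\bar n_{k-1}}\circ\cdots\circ\sigma_{\bar n_1}(x)\right)=\frac1q\,d\left(\sigma_{\bar n_k}\circ\cdots\circ\sigma_{\bar n_1}(x)\right)$, splits the integral over the $q$ pieces determined by a digit value, applies \eqref{eq: system-q}, and iterates to obtain the geometric series $\sum_kAB^k=\frac{1}{q-1}\sum_j\beta_j$ --- i.e.\ exactly your fixed-point equation $I=\frac1q\sum_i\beta_i+\frac Iq$, unrolled rather than solved. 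One point in your favour: the paper writes the decomposition as $\sum_j\int_{j/q}^{(j+1)/q}$, which identifies the $j$-th piece with the cylinder $\{\alpha_1=j\}$, whereas the first equation of \eqref{eq: system-q} involves $\beta_{\alpha_{n_1}}$ and $p_{\alpha_{n_1}}$; when $n_1\ne1$ the correct partition is by the value of $\alpha_{n_1}$ (a union of cylinders, still of measure $1/q$), which is precisely what you say. Your permutation argument --- that the map $T$ rearranging digit positions according to the bijection $(n_k)$ preserves Lebesgue measure because it permutes rank-$m$ cylinders of equal length $q^{-m}$, so that $\int_0^1g\,dx=\int_0^1h\,dx$ for the classical Salem-type function $h$ --- is sound (modulo the measure-zero set of $q$-rationals, and note the composition should read $g=h\circ T$ with $T(x)=\Delta^q_{\alpha_{n_1}\alpha_{n_2}\ldots}$), and it buys a conceptual explanation of \emph{why} the answer is independent of $(n_k)$, which the paper's computation delivers only implicitly. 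Either version is acceptable; the direct recursion is closer to the source and avoids having to set up $T$ rigorously.
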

\begin{proof}

By $A$ denote the sum $\frac{1}{q}\sum^{q-1} _{j=0}{\beta_j}$ and by $B$ denote the sum $\frac{1}{q}\sum^{q-1} _{j=0}{p_j}$. Since equality~\eqref{eq: generalized shift q-ary} holds, we obtain
$$
x=\frac{1}{q}\sigma_m(x)+\frac{(q-1)}{q}\sum^{m-1} _{k=1}{\frac{\alpha_k}{q^k}}+\frac{\alpha_m}{q^{m}}
$$
and
$$
dx=\frac{1}{q}d(\sigma_m(x)).
$$

In the general case, for arbitrary positive integers $n_1$ and $n_2$, using equalities \eqref{eq: 2-composition} and \eqref{eq: gshq}, we have 
$$
\sigma_{n_2}\circ \sigma_{n_1}(x)=\sigma_{n_2}\left(\Delta^{q} _{\alpha_1\alpha_2...\alpha_{n_1-1}\alpha_{n_1+1}...}\right)=\begin{cases}
\Delta^{q} _{\alpha_1\alpha_2...\alpha_{n_2-1}\alpha_{n_2+1}...\alpha_{n_1-1}\alpha_{n_1+1}...}&\text{if $n_1>n_2$}\\
\Delta^{q} _{\alpha_1\alpha_2...\alpha_{n_1-1}\alpha_{n_1+1}...\alpha_{n_2-1}\alpha_{n_2}\alpha_{n_2+2}...}&\text{if $n_1<n_2$}\\
\Delta^{q} _{\alpha_1\alpha_2...\alpha_{n_0-1}\alpha_{n_0+2}\alpha_{n_0+3}...}&\text{if $n_1=n_2=n_0$.}
\end{cases}
$$
Also,
$$
\sigma_{n_1}(x)=\frac{1}{q}\sigma_{n_2}\circ\sigma_{n_1}(x)+\frac{q-1}{q}\sum^{n_2-1} _{k=1}{\frac{\alpha_k}{q^k}}+\frac{\alpha_{n_2}}{q^{n_2}}~~~\mbox{whenever}~~~n_1>n_2,
$$
$$
\sigma_{n_1}(x)=\frac{1}{q}\sigma^2 _{n_0}+\frac{q-1}{q}\sum^{n_0-1} _{k=1}{\frac{\alpha_k}{q^k}}+\frac{\alpha_{n_0+1}}{q^{n_0}}~~~\mbox{whenever}~~~n_1=n_2=n_0,
$$
and
$$
\sigma_{n_1}(x)=\frac{1}{q}\sigma_{n_2}\circ\sigma_{n_1}(x)+\frac{q-1}{q}\left(\sum^{n_1-1} _{k=1}{\frac{\alpha_k}{q^k}}+\frac{\alpha_{n_1+1}}{q^{n_1}}+\frac{\alpha_{n_1+2}}{q^{n_1+1}}+\dots + \frac{\alpha_{n_2}}{q^{n_2-1}}\right)+\frac{\alpha_{n_2+1}}{q^{n_2}}~~~\mbox{whenever}~~~n_1<n_2.
$$
Hence, for any posirive integer $k$, the following is true:
$$
d\left(\sigma_{n_{k-1}}\circ \dots \circ \sigma_{n_2}\circ \sigma_{n_1}(x)\right)=\frac{1}{q}d\left(\sigma_{n_k}\circ \sigma_{n_{k-1}}\circ \dots \circ \sigma_{n_2}\circ \sigma_{n_1}(x)\right),
$$
where $\sigma_0(x)=x$.

By analogy, we obtain
$$
\sigma_{\bar n_2}\circ \sigma_{\bar n_1}(x)=\begin{cases}
\Delta^{q} _{\alpha_1\alpha_2...\alpha_{n_2-1}\alpha_{n_2+1}...\alpha_{n_1-1}\alpha_{n_1+1}...}&\text{whenever $n_1>n_2$}\\
\Delta^{q} _{\alpha_1\alpha_2...\alpha_{n_1-1}\alpha_{n_1+1}...\alpha_{n_2-1}\alpha_{n_2+1}\alpha_{n_2+2}...}&\text{whenever  $n_1<n_2$}
\end{cases}
$$
and
$$
d\left(\sigma_{\bar n_{k-1}}\circ \dots \circ \sigma_{\bar n_2}\circ \sigma_{\bar n_1}(x)\right)=\frac{1}{q}d\left(\sigma_{\bar n_k}\circ \sigma_{\bar n_{k-1}}\circ \dots \circ \sigma_{\bar n_2}\circ \sigma_{\bar n_1}(x)\right), 
$$
where $k\in\mathbb N$ and $\sigma_0(x)=x$.

So, we have
$$
\int^1 _0 {g(x)dx}=\sum^{q-1} _{j=0}{\int^{\frac{j+1}{q}} _{\frac{j}{q}} {g(x)dx}}=\sum^{q-1} _{j=0}{\int^{\frac{j+1}{q}} _{\frac{j}{q}} {\left(\beta_j+p_jg(\sigma_{\bar n_1}(x))\right)dx}}
$$
$$
=\frac{1}{q}\sum^{q-1} _{j=0}{\beta_j}+\frac{1}{q}\left(\sum^{q-1} _{j=0}{p_j}\right)\int^1 _0 {g(\sigma_{\bar n_1}(x))d(\sigma_{\bar n_1}(x))}
$$
$$
=\frac{1}{q}\sum^{q-1} _{j=0}{\beta_j}+\frac{1}{q}\left(\sum^{q-1} _{j=0}{p_j}\right)\left(\sum^{q-1} _{j=0}{\int^{\frac{j+1}{q}} _{\frac{j}{q}} {\left(\beta_j+p_jg(\sigma_{\bar n_2}\circ \sigma_{\bar n_1}(x))\right)d(\sigma_{\bar n_1}(x))}}\right)
$$
$$
=A+B\left(A+B\int^1 _0 {g(\sigma_{\bar n_2}\circ \sigma_{\bar n_1}(x)))d(\sigma_{\bar n_2}\circ \sigma_{\bar n_1}(x))}\right)
$$
$$
=A+AB+B^2\left(\sum^{q-1} _{j=0}{\int^{\frac{j+1}{q}} _{\frac{j}{q}} {\left(\beta_j+p_jg(\sigma_{\bar n_3}\circ\sigma_{\bar n_2}\circ \sigma_{\bar n_1}(x))\right)d(\sigma_{\bar n_2}\circ\sigma_{\bar n_1}(x))}}\right)
$$
$$
=A+AB+B^2\left(A+B\int^1 _0 {g(\sigma_{\bar n_3}\circ\sigma_{\bar n_2}\circ \sigma_{\bar n_1}(x)))d(\sigma_{\bar n_3}\circ\sigma_{\bar n_2}\circ \sigma_{\bar n_1}(x))}\right)
$$
$$
=A+AB+AB^2+B^3\left(A+B\int^1 _0 {g(\sigma_{\bar n_4}\circ\sigma_{\bar n_3}\circ\sigma_{\bar n_2}\circ \sigma_{\bar n_1}(x)))d(\sigma_{\bar n_4}\circ\sigma_{\bar n_3}\circ\sigma_{\bar n_2}\circ \sigma_{\bar n_1}(x))}\right)=\dots
$$
$$
\dots = A+AB+\dots +AB^{k-1}+B^k\left(A+B\int^1 _0 {g(\sigma_{\bar n_{k+1}}\circ\sigma_{\bar n_k}\circ\ldots \circ \sigma_{\bar n_1}(x)))d(\sigma_{\bar n_{k+1}}\circ\sigma_{\bar n_k}\circ \ldots \circ \sigma_{\bar n_1}(x))}\right).
$$

Since
$$
B^{k+1}=\left(\frac{1}{q}\sum^{q-1} _{j=0}{p_j}\right)^{k+1}=\left(\frac{1}{q}\right)^{k+1} \to 0 ~\text{as}~ k\to\infty,
$$
we obtain
$$
\int^1 _0{g(x)dx}=\lim_{k\to\infty}{\left(\sum^{k} _{t=0}{AB^t}+B^{k+1}\int^1 _0 {g(\sigma_{\bar n_{k+1}}\circ\sigma_{\bar n_k}\circ\ldots \circ \sigma_{\bar n_1}(x)))d(\sigma_{\bar n_{k+1}}\circ\sigma_{\bar n_k}\circ \ldots \circ \sigma_{\bar n_1}(x))}\right)}
$$
$$
=\sum^{\infty} _{k=0}{AB^k}=\left(\sum^{q-1} _{j=0}{\beta_j}\right)\left(\sum^{\infty} _{k=0}{\frac{1}{q^{k+1}}}\right)=\frac{1}{q-1}\sum^{q-1} _{j=0}{\beta_j}.
$$
\end{proof}

In the next articles of the author of this paper, generalizations and properties of solutions of  system \eqref{eq: system-q} of functional equations will be investigated for the cases of various numeral systems (with a finite or infinite alphabet, with a constant or variable alphabet, positive, alternating, and sign-variable expansions, etc.). 

\begin{remark}
 It can   be interesting to consider the case when $(n_k)$ is an arbitrary fixed  sequence (finite or infinite) of positive integers. Then  the function $g$ can be a constant function, a linear function,  or a function having pathological (complicated) structure, etc. It depends on  $(n_k)$. Such problems will be investigated in the next papers of the author of this article.
\end{remark}

\end{document}